\documentclass[12pt]{amsart}

\usepackage{amssymb,latexsym}
\usepackage{enumerate}
\usepackage{hyperref}

\makeatletter
\@namedef{subjclassname@2010}{%
  \textup{2010} Mathematics Subject Classification}
\makeatother

\newtheorem {theorem}{Theorem}[section]
\newtheorem {proposition}{Proposition}[section]

\newtheorem {lemma}{Lemma}[section]
\newtheorem {example}{Example}[section]
\newtheorem {definition}{Definition}[section]
\newtheorem {remark}{Remark}[section]

\DeclareMathOperator\KKT{KKT}
\DeclareMathOperator\grad{grad}

\frenchspacing

\textwidth=13.5cm
\textheight=22cm
\parindent=16pt

\begin{document}

\baselineskip=17pt

\title[Representations of non-negative polynomials]{Representations of non-negative polynomials via critical ideals}

\author[D. T. Hiep]{Dang Tuan Hiep}

\address{Department of Mathematics, University of Dalat, 01 Phu Dong Thien Vuong, Da Lat, Vietnam}

\email{dangtuanhiep210@yahoo.com}

\address{{\it Current address:} Dipartimento di Matematica, Universit\`{a} degli Studi di Bari Aldo Moro, Via E. Orabona, 4 - 70125, Bari, ITALY}

\email{dang@dm.uniba.it}

\subjclass[2010]{11E25; 13P25; 14P10; 90C22}

\keywords{Non-negative polynomials; Sum of Squares (SOS); Optimization of Polynomials; Semidefinite Programming (SDP)}

\begin{abstract}
This paper studies the representations of a non-negative polynomial $f$ on a non-compact semi-algebraic set $K$ modulo its critical ideal. Under the assumptions that the semi-algebraic set $K$ is regular and $f$ satisfies the boundary Hessian conditions (BHC) at each zero of $f$ in $K$, we show that $f$ can be represented as a sum of squares (SOS) of real polynomials modulo its critical ideal if $f\ge 0$ on $K$. In particular, we focus on the polynomial ring $\mathbb R[x]$.
\end{abstract}

\maketitle

\section{introduction}

We know that a polynomial in one variable $f(x)\in\mathbb R[x]$ satisfies $f(x)\ge 0$, for all $x\in\mathbb R$, then $f(x)=\sum_{i=1}^mg^2_i(x)$, where $g_i(x)\in\mathbb R[x]$, i.e., $f$ is a sum of squares in $\mathbb R[x]$ (SOS for short). However, in multi-variate cases, this is not true. A counterexample was given by Motzkin in 1967. If $f(x,y)=1+x^4y^2+x^2y^4-3x^2y^2$, then $f(x,y)\ge 0$, for all $x,y\in \mathbb R$. But $f$ is not a SOS in $\mathbb R[x,y]$. To remedy that, we will consider the polynomials that are positive on $K$, where $K$ is a semi-algebraic set in $\mathbb R^n$. For example, Schm\"{u}dgen's theorem \cite{Schmudgen1991} states that for a compact semi-algebraic set, every strictly positive polynomial belongs to the corresponding finitely generated preordering. Afterward, Putinar \cite{Putinar1993} simplified this representation under an additional assumption by using the quadratic module instead of the preordering. However, these results of Schm\"{u}dgen and Putinar have two restrictions. Firstly, the polynomials are positive, not merely non-negative. Secondly, $K$ must be a compact semi-algebraic set. Hence we seek to identify the representations of the non-negative polynomials on the non-compact semi-algebraic sets.

In \cite{NDS2006}, the authors presented a representation of the non-negative polynomials on the whole space modulo their gradient ideals. Afterward, in \cite{NDP2007}, the authors proved a similar representation on the arbitrary semi-algebraic sets. These results were achieved under the condition of the corresponding ideals must be radical. However, it is not simple to check this condition. In order to overcome such limitation, in \cite{Marshall2009}, Marshall considered another condition - the boundary Hessian condition (BHC). He proved that the result in \cite{NDS2006} still held true if the radical condition  is replaced by the BHC condition. In \cite{Dang2009}, the author presented an extension of theorem 2.1 in \cite{Marshall2009} in the same way that the result in \cite{NDP2007} was the extension of the corresponding result in \cite{NDS2006}.

However, in \cite{Dang2009} and \cite{NDP2007} the authors considered a larger polynomial ring $\mathbb R[x,\lambda]$, i.e., they added Lagrange multipliers to the representations. This paper will help us overcome this. We will present the representations of the non-negative polynomials via their critical ideals. In particular, we focus on the polynomial ring $\mathbb R[x]$.

\section{Preliminaries}
In this section, we present some notions and results from algebraic geometry and real algebra needed for our discussions. The readers may consult \cite{BCR}, \cite{Cox1997}, and \cite{PD2004} for more details.

Throughout this paper, denote by $\mathbb R[x]$ the ring of polynomials in $x=(x_1,\ldots, x_m)$ with real coefficients. Given an ideal $I\subseteq\mathbb R[x]$, define its {\it complex variety} to be the set
$$V(I)=\{x\in\mathbb C^m\mid p(x)=0,\ \forall p\in I\},$$
and its {\it real variety} to be
$$V^{\mathbb R}(I)=V(I)\cap \mathbb R^m.$$

A nonempty variety $V=V(I)\subseteq\mathbb C^m$ is {\it irreducible} if there do not exist two proper subvarieties $V_1,V_2\subset V$ such that $V=V_1\cup V_2$. The readers should note that in this paper, ``irreducible'' means that the set of complex zeros cannot be written as a proper union of subvarieties defined by real polynomials.

Given any ideal $I$ of $\mathbb R[x]$, its radical ideal $\sqrt{I}$ is defined to be the following ideal:
$$\sqrt{I}=\{q\in\mathbb R[x]\mid q^l\in I\ \text{for some}\ l\in\mathbb N\}.$$
Clearly, $I\subseteq\sqrt{I}$; $I$ is a {\it radical ideal} if $\sqrt{I}=I$. As usual, for a variety $V\subseteq\mathbb C^m$, $I(V)$ denotes the ideal in $\mathbb C[x]$ of polynomials vanishing on $V$. We will write $I^{\mathbb R}(V)$ for the ideal $I(V)\cap\mathbb R[x]$.

We need versions of the Nullstellens\"{a}tz for varieties defined by polynomials in $\mathbb R[x]$. The following two theorems are normally stated for ideals in $\mathbb C[x]$; however, keeping in mind that $V(I)$ lies in $\mathbb C^m$, they hold as stated for ideals in $\mathbb R[x]$.
\begin{theorem}[\cite{Cox1997}]\label{weaknullstellensatz}
If $I$ is an ideal in $\mathbb R[x]$ such that $V(I)=\emptyset$, then $1\in I$.
\end{theorem}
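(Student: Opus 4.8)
The plan is to reduce the statement to the classical weak Nullstellensatz over the algebraically closed field $\mathbb C$ and then descend the conclusion back to $\mathbb R[x]$. First I would pass from $I$ to its extension $\tilde I = I\,\mathbb C[x]$, the ideal generated by $I$ inside $\mathbb C[x]$. Since $\tilde I$ is generated by the same polynomials as $I$, a point of $\mathbb C^m$ annihilates every element of $\tilde I$ exactly when it annihilates every element of $I$; hence $V(\tilde I) = V(I) = \emptyset$ as subsets of $\mathbb C^m$.

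Next I would invoke Hilbert's weak Nullstellensatz in its usual form over the algebraically closed field $\mathbb C$: an ideal of $\mathbb C[x]$ with empty variety in $\mathbb C^m$ must be the unit ideal. Applied to $\tilde I$ this gives $1 \in \tilde I$, so one may write $1 = \sum_{j} c_j p_j$ with $p_j \in I \subseteq \mathbb R[x]$ and $c_j \in \mathbb C[x]$.

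The remaining step is the descent $1 \in \tilde I \Rightarrow 1 \in I$. Here I would use the $\mathbb R[x]$-module decomposition $\mathbb C[x] = \mathbb R[x] \oplus \sqrt{-1}\,\mathbb R[x]$: writing each coefficient as $c_j = a_j + \sqrt{-1}\,b_j$ with $a_j, b_j \in \mathbb R[x]$, and using that each $p_j$ is real, the real part of the identity $1 = \sum_j c_j p_j$ yields $1 = \sum_j a_j p_j$, an expression all of whose factors lie in $\mathbb R[x]$. Since each $p_j \in I$, this exhibits $1$ as an element of $I$, completing the argument. Equivalently, the descent can be phrased as the contraction identity $\tilde I \cap \mathbb R[x] = I$, which holds because taking real parts is an $\mathbb R[x]$-linear retraction of $\mathbb C[x]$ onto $\mathbb R[x]$.

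I expect the main obstacle to be precisely this descent: one must verify that extending to $\mathbb C[x]$ and contracting back does not enlarge the ideal, i.e. that no spurious real relation is introduced when passing through $\mathbb C$. The real-part projection above is exactly what rules this out, whereas the reduction to the complex case and the citation of Hilbert's theorem are purely formal.
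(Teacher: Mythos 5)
Your proof is correct and is essentially the argument the paper relies on: the paper gives no proof of its own, but simply cites the classical weak Nullstellensatz over $\mathbb C$ from \cite{Cox1997} and remarks that it holds as stated for ideals in $\mathbb R[x]$ because $V(I)$ is taken in $\mathbb C^m$. Your extension to $I\,\mathbb C[x]$ followed by the real-part descent $1=\sum_j a_j p_j$ is exactly the routine verification that this remark leaves implicit.
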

\begin{theorem}[\cite{Cox1997}]\label{strongnullstellensatz}
If $I$ is an ideal in $\mathbb R[x]$, then $I^{\mathbb R}(V(I))=\sqrt{I}$.
\end{theorem}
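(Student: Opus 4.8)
The plan is to prove the two inclusions $\sqrt{I}\subseteq I^{\mathbb R}(V(I))$ and $I^{\mathbb R}(V(I))\subseteq\sqrt{I}$ separately; the first is immediate, while the second rests on the Rabinowitsch trick together with the Weak Nullstellensatz (Theorem \ref{weaknullstellensatz}). For the easy inclusion, suppose $q\in\sqrt{I}$, so that $q\in\mathbb R[x]$ and $q^l\in I$ for some $l\in\mathbb N$. Every point of $V(I)$ annihilates each element of $I$, so $q^l$ vanishes on $V(I)$, and hence so does $q$. Thus $q\in I(V(I))\cap\mathbb R[x]=I^{\mathbb R}(V(I))$, which already gives $\sqrt{I}\subseteq I^{\mathbb R}(V(I))$.

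For the reverse inclusion, let $f\in I^{\mathbb R}(V(I))$; the goal is to produce an $l$ with $f^l\in I$. First I would invoke the Hilbert Basis Theorem to write $I=(g_1,\dots,g_s)$ with $g_i\in\mathbb R[x]$. The key device is to adjoin a fresh variable $y$ and form the ideal
$$J=(g_1,\dots,g_s,\,1-yf)\subseteq\mathbb R[x,y].$$
I claim $V(J)=\emptyset$ in $\mathbb C^{m+1}$: any common zero $(a,b)$ would force $a\in V(I)$ through the first $s$ generators, hence $f(a)=0$ since $f$ vanishes on $V(I)$, whereupon the last generator would read $1=1-b\,f(a)=0$, which is absurd.

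With the emptiness of $V(J)$ in hand, the Weak Nullstellensatz applied to the ideal $J\subseteq\mathbb R[x,y]$ yields $1\in J$, so there are $p_1,\dots,p_s,q\in\mathbb R[x,y]$ with
$$1=\sum_{i=1}^s p_i(x,y)\,g_i(x)+q(x,y)\,\bigl(1-yf(x)\bigr).$$
Substituting $y=1/f(x)$ in the field $\mathbb R(x)$ kills the last term, and then clearing denominators by multiplying through by a sufficiently large power $f^l$ turns each $p_i(x,1/f)$ into a genuine polynomial, producing an identity $f^l=\sum_{i=1}^s\tilde p_i(x)\,g_i(x)$ with $\tilde p_i\in\mathbb R[x]$. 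Hence $f^l\in I$ and $f\in\sqrt{I}$, completing the inclusion. The genuinely clever step, and the one I would flag as the crux, is the Rabinowitsch construction of $J$, since everything downstream is bookkeeping; a secondary point worth checking is that the argument never leaves $\mathbb R[x]$, which is guaranteed precisely because Theorem \ref{weaknullstellensatz} is stated for ideals in $\mathbb R[x]$ with $V$ taken in $\mathbb C^m$, so the coefficients $p_i,q$ may be chosen real even though the vanishing of $V(J)$ is tested over $\mathbb C$.
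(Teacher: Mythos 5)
Your proof is correct, and the argument is the standard one. For comparison: the paper itself offers no proof of Theorem \ref{strongnullstellensatz} at all --- it is quoted from \cite{Cox1997}, accompanied only by the remark that the Nullstellens\"{a}tze, normally stated for ideals in $\mathbb C[x]$, ``hold as stated for ideals in $\mathbb R[x]$'' because $V(I)$ is taken in $\mathbb C^m$. Your Rabinowitsch derivation is essentially the proof given in that reference, and it has the added value of making the paper's remark transparent: since Theorem \ref{weaknullstellensatz} is already formulated for real-coefficient ideals (in an arbitrary number of variables, so invoking it for $J\subseteq\mathbb R[x,y]$ is legitimate), every step stays inside $\mathbb R[x,y]$ and the certificates $\tilde p_i$ come out real automatically --- exactly the subtlety you flag at the end. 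Two small points you should still patch, both routine: the substitution $y=1/f(x)$ only makes sense when $f$ is not the zero polynomial, so dispose of that case first (if $f=0$ then $f\in I\subseteq\sqrt{I}$ trivially); and when clearing denominators, say explicitly that $l$ is chosen at least as large as the maximal degree in $y$ among the $p_i$, which is what makes each $\tilde p_i(x)=f^l\,p_i\bigl(x,1/f(x)\bigr)$ a genuine polynomial.
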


Let $g_1,\ldots,g_s\in\mathbb R[x]$. We define the {\it preordering} generated by $g_1,\ldots,g_s$ as follows:
$$P=\biggl\{\sum_{e\in\{0,1\}^s}\sigma_eg_1^{e_1}\ldots g_s^{e_s}\biggr\},$$
where $e=(e_1,\ldots,e_s)\in\{0,1\}^s$ and $\sigma_e$ are sums of squares of polynomials in $\mathbb R[x]$.\\
We also define the {\it semi-algebraic set} generated by $g_1,\ldots,g_s$ as follows:
$$K=\{x\in\mathbb R^n\mid g_i(x)\ge 0, i=1,\ldots,s\}.$$

\begin{definition}[see \cite{NW1999}, Definition 12.1]
For each $x\in\mathbb R^n$, let $J_x$ be the set of indices $j$ for which $g_j$ vanishes at $x$. The semi-algebraic set $K$ is called {\it regular}, if for each $x\in K$, the vectors $\nabla g_j(x), j\in J_x$, are linearly independent.
\end{definition}

Throughout this paper, we always assume that the semi-algebraic set $K$ is regular.

\section{The critical variety}

\begin{definition}
The {\it critical variety of $f$ on $K$} is defined as follows:
\begin{eqnarray*}
C(f,K):=\{x\in\mathbb R^n &\mid& \ \textrm{there exist real numbers $\lambda_i$ such that}\\
&&\nabla f(x)-\sum_{i=1}^s\lambda_i\nabla g_i(x)=0,\\
&&\lambda_ig_i(x)=0, i=1,\ldots,s\}.
\end{eqnarray*}
\end{definition}
\begin{remark}\rm
\begin{itemize}
\item[]
\item[(i)] In the global case, i.e., when the semi-algebraic set $K$ is the whole space $\mathbb R^n$, we have
$$C(f,K)=\{x\in\mathbb R^n\mid\nabla f(x)=0\},$$
which is the {\it real gradient variety} of $f$ (see \cite{NDS2006}).
\item[(ii)] Consider the projection $\pi\colon\mathbb R^n\times\mathbb R^s\to\mathbb R^n, (x,\lambda)\mapsto x$, where variables $\lambda=(\lambda_1,\ldots,\lambda_s)$ are Lagrange multipliers. Then $C(f,K)=\pi(V_{\KKT})$, here
\begin{eqnarray*}
V_{\KKT}:=\{(x,\lambda)\in\mathbb R^n\times\mathbb R^s &\mid& \nabla f(x)-\sum_{i=1}^s\lambda_i\nabla g_i(x)=0,\\
&& \lambda_ig_i(x)=0, i=1,\ldots,s\},
\end{eqnarray*}
is the {\it real KKT variety of $f$ on $K$} (see \cite{NDP2007}).
\end{itemize}
\end{remark}
In this section, we will study the properties of the critical variety $C(f,K)$.
\begin{proposition}\label{var1}
The following statements hold true
\begin{itemize}
\item[(i)] $C(f,K)=C(f+a,K)$, for all $a\in\mathbb R$.
\item[(ii)] If $f$ attains its infimum at $x^*\in K$, then $x^*\in C(f,K)$.
\end{itemize}
\end{proposition}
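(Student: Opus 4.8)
The plan is to treat the two statements separately, since each reduces to essentially one observation once the defining system of $C(f,K)$ is written out.

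For part (i), the only fact I need is the linearity of the gradient: for any constant $a\in\mathbb R$ we have $\nabla(f+a)=\nabla f$. The system defining $C(f+a,K)$ is obtained from the one defining $C(f,K)$ by substituting $\nabla(f+a)(x)$ for $\nabla f(x)$ in the stationarity equation $\nabla f(x)-\sum_{i=1}^s\lambda_i\nabla g_i(x)=0$, while the complementarity conditions $\lambda_i g_i(x)=0$ remain untouched (they do not involve $f$ at all). Since the two gradients coincide, the defining systems are literally identical, and hence $x$ admits a tuple $(\lambda_i)$ witnessing membership in $C(f,K)$ if and only if it admits the same tuple witnessing membership in $C(f+a,K)$. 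This gives $C(f+a,K)=C(f,K)$ with no computation beyond this substitution.

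For part (ii), I would invoke the Karush--Kuhn--Tucker first-order necessary conditions. If $f$ attains its infimum on $K$ at $x^*$, then $x^*$ is in particular a local minimizer of $f$ subject to the constraints $g_i\ge 0$. By the standing assumption that $K$ is regular, the gradients $\nabla g_j(x^*)$ for $j\in J_{x^*}$ are linearly independent; this is exactly the linear independence constraint qualification. Under this qualification the KKT theorem (see \cite{NW1999}) produces multipliers $\lambda_1,\ldots,\lambda_s$, in fact nonnegative ones, satisfying the stationarity condition $\nabla f(x^*)-\sum_{i=1}^s\lambda_i\nabla g_i(x^*)=0$ together with the complementary slackness conditions $\lambda_i g_i(x^*)=0$. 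These are precisely the two defining conditions of $C(f,K)$. Observe that the definition asks only for \emph{real} multipliers, so the nonnegativity delivered by KKT is more than is required; we may simply discard the sign information and conclude $x^*\in C(f,K)$.

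The only genuinely nontrivial ingredient here is the constraint qualification needed to apply KKT, and this is furnished directly by the regularity hypothesis that is assumed throughout the paper. This is where the argument could fail in its absence: without a constraint qualification, a minimizer need not satisfy the first-order stationarity condition at all, so membership in $C(f,K)$ could break down. Thus I expect the regularity assumption to be the load-bearing step, with the remainder of the proof being an immediate unwinding of definitions.
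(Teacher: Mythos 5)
Your proposal is correct and follows essentially the same route as the paper: part (i) by invariance of the gradient under adding a constant, and part (ii) by the KKT theorem from \cite{NW1999} with the regularity of $K$ serving as the constraint qualification (the paper phrases this as first obtaining Fritz John multipliers $\lambda_0^*,\ldots,\lambda_s^*$ and then using regularity to normalize $\lambda_0^*=1$, which is the same argument in slightly different packaging).
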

\begin{proof}
\begin{itemize}
\item[]
\item[(i)] We see clearly that $\nabla f=\nabla (f+a)$, for all $a\in\mathbb R$. Then, by definition of the critical variety, we have $C(f,K)=C(f+a,K)$, for all $a\in\mathbb R$.
\item[(ii)] By Karush-Kuhn-Tucker theorem (see e.g. \cite{NW1999}), if $f$ attains its infimum at $x^*\in K$, then there exist $\lambda^*_0,\lambda^*_1,\ldots,\lambda^*_s$ at least one of which is different from zero, such that
    $$\lambda^*_0\nabla f(x^*)-\sum_{i=1}^s\lambda^*_i\nabla g_i(x^*)=0,$$
    $$\lambda^*_ig_i(x^*)=0, i=1,\ldots,s.$$
    Since $K$ is regular, then we can choose $\lambda^*_0=1$.\\
    Thus $x^*\in C(f,K)$.
\end{itemize}

\end{proof}
We will use the following notations in the remainder of the paper.
\begin{definition}
For each subset $J$ of $\{1,\ldots,s\}$, we consider the polynomial
$$g_J(x):=\left\{\begin{array}{lll}
\prod_{j\in J}g_j(x) & , & J\ne\emptyset,\\
1 & , & J=\emptyset.
\end{array}\right.$$
If $J=\{j_1\ldots,j_k\}$, we will denote by $h_J\in\mathbb R[x]$ the following polynomial
$$h_J(x):=\det(A_J(x)A_J^T(x)),$$
where
$$A_J(x):=\left(\begin{array}{cccc}
\frac{\partial f}{\partial x_1} & \frac{\partial f}{\partial x_2} & \cdots & \frac{\partial f}{\partial x_n}\\
\frac{\partial g_{j_1}}{\partial x_1} & \frac{\partial g_{j_1}}{\partial x_2} & \cdots & \frac{\partial g_{j_1}}{\partial x_n}\\
\vdots & \vdots & \cdots & \vdots \\
\frac{\partial g_{j_k}}{\partial x_1} & \frac{\partial g_{j_k}}{\partial x_2} & \cdots & \frac{\partial g_{j_k}}{\partial x_n}
\end{array}\right)$$
is a $(k+1)\times n$-matrix. Observe that $h_J(x)=0$ if and only if the vectors $\nabla f, \nabla g_j, j\in J$ are linearly dependent.
\end{definition}
\begin{proposition}\label{var2}
The critical variety $C(f,K)$ is an algebraic set. More precisely we have
$$C(f,K)=\{x\in\mathbb R^n\mid g_J(x)h_{J^c}(x)=0, \forall J\subseteq\{1,\ldots,s\}\},$$
where we use the notation $J^c:=\{1,\ldots,s\}\backslash J$.
\end{proposition}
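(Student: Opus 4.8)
The plan is to prove a set equality by establishing two inclusions, where the right-hand side is the algebraic set $Z:=\{x\in\mathbb R^n\mid g_J(x)h_{J^c}(x)=0,\ \forall J\subseteq\{1,\ldots,s\}\}$. The key observation is that the defining condition of $C(f,K)$ is governed, at each point $x$, by the index set $J_x=\{j\mid g_j(x)=0\}$ of active constraints. For a point $x\in C(f,K)$, the KKT conditions say $\nabla f(x)=\sum_{i=1}^s\lambda_i\nabla g_i(x)$ together with complementary slackness $\lambda_ig_i(x)=0$; the latter forces $\lambda_i=0$ whenever $g_i(x)\ne 0$, so that the gradient of $f$ is in fact a linear combination of only the active gradients $\nabla g_j(x)$, $j\in J_x$. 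By the final observation in the preceding definition, this linear dependence of $\nabla f,\nabla g_j\ (j\in J_x)$ is exactly equivalent to $h_{J_x}(x)=0$.

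First I would prove $C(f,K)\subseteq Z$. Fix $x\in C(f,K)$ and an arbitrary $J\subseteq\{1,\ldots,s\}$; I must show $g_J(x)h_{J^c}(x)=0$. I would split into two cases according to whether all constraints indexed by $J^c$ are active at $x$. If every $j\in J^c$ satisfies $g_j(x)=0$, then $J_x\supseteq J^c$, and since $\nabla f(x)$ is a combination of $\{\nabla g_j(x):j\in J_x\}$, a fortiori the larger family $\nabla f,\nabla g_j\ (j\in J^c)$ is linearly dependent, giving $h_{J^c}(x)=0$. Otherwise there is some $j_0\in J^c$ with $g_{j_0}(x)\ne 0$; but then $j_0\notin J$, and I claim this forces a vanishing elsewhere. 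The cleaner way is to note that if $g_J(x)\ne 0$ then every $j\in J$ has $g_j(x)\ne 0$, hence $J\cap J_x=\emptyset$, so $J_x\subseteq J^c$; the active gradients all lie among $\{\nabla g_j:j\in J^c\}$, and since $\nabla f(x)$ is a combination of the active ones it is a combination of $\{\nabla g_j(x):j\in J^c\}$, whence $\nabla f,\nabla g_j\ (j\in J^c)$ are linearly dependent and $h_{J^c}(x)=0$. Either way the product $g_J(x)h_{J^c}(x)$ vanishes.

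Conversely, to show $Z\subseteq C(f,K)$, I would fix $x\in Z$ and apply the defining condition to the single distinguished set $J=\{1,\ldots,s\}\setminus J_x$, so that $J^c=J_x$. Then $g_J(x)=\prod_{j\notin J_x}g_j(x)\ne 0$ by the very definition of $J_x$ (no inactive constraint vanishes), and therefore the hypothesis $g_J(x)h_{J^c}(x)=0$ forces $h_{J_x}(x)=0$. This says precisely that $\nabla f(x),\nabla g_j(x)\ (j\in J_x)$ are linearly dependent. Because $K$ is regular, the vectors $\{\nabla g_j(x):j\in J_x\}$ are linearly independent, so the dependence relation must involve $\nabla f(x)$ with nonzero coefficient; solving, I obtain scalars $\lambda_j$ with $\nabla f(x)=\sum_{j\in J_x}\lambda_j\nabla g_j(x)$. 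Setting $\lambda_i=0$ for $i\notin J_x$ extends this to $\nabla f(x)=\sum_{i=1}^s\lambda_i\nabla g_i(x)$ with complementary slackness $\lambda_ig_i(x)=0$ automatically satisfied (active constraints have $g_i(x)=0$, inactive ones have $\lambda_i=0$). Hence $x\in C(f,K)$.

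The main obstacle is the first inclusion, specifically making rigorous that complementary slackness lets one discard the inactive constraints from the linear combination and that this is robust across all $J$, not just the distinguished one. The subtle point is that $h_{J^c}(x)=0$ must be derived whenever $g_J(x)\ne 0$, and the argument hinges on the equivalence ``$h_J(x)=0$ iff $\nabla f,\nabla g_j\ (j\in J)$ are linearly dependent'' from the definition, together with the elementary fact that enlarging a linearly dependent family keeps it dependent. Regularity of $K$ is essential only in the reverse inclusion, to guarantee that the dependence genuinely expresses $\nabla f(x)$ in terms of the active gradients with a valid choice of multipliers.
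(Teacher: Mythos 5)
Your first inclusion $C(f,K)\subseteq Z$ is correct, and you rightly observe that it needs no regularity: complementary slackness kills the multipliers of the inactive constraints, and if $g_J(x)\ne 0$ then the active set $J_x$ is contained in $J^c$, so the expression of $\nabla f(x)$ in terms of the active gradients is a fortiori a linear dependence of the family $\{\nabla f(x)\}\cup\{\nabla g_j(x)\mid j\in J^c\}$, i.e.\ $h_{J^c}(x)=0$. The gap is in the reverse inclusion, precisely at the step ``Because $K$ is regular, the vectors $\{\nabla g_j(x):j\in J_x\}$ are linearly independent.'' The paper's definition of regularity (following the constraint qualification of Nocedal--Wright) asserts this independence only for $x\in K$, while your $x$ is an arbitrary real point of $Z$. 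When $s\ge 2$, a point can satisfy $g_{j_0}(x)=0$ for some $j_0$ and still lie outside $K$ because some other $g_i(x)<0$; at such a point the active gradients may be linearly dependent, and then $h_{J_x}(x)=0$ yields a dependence relation that need not involve $\nabla f(x)$ at all, so no multipliers can be extracted.

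This is not a removable technicality in your write-up: under the stated hypotheses the asserted equality itself can fail off $K$. Take $n=2$, $s=2$, $f=x$, $g_1=x^2-y^2$, $g_2=y-1$. Then $K=\{(x,y): y\ge 1,\ |x|\ge y\}$ is regular: on $K$ one checks $\nabla g_1\ne 0$ where $g_1=0$, $\nabla g_2\ne 0$ where $g_2=0$, and the two gradients are independent where both vanish. At $p=(0,0)$ we have $J_p=\{1\}$, $\nabla g_1(p)=(0,0)$, $g_2(p)=-1$, $\nabla f(p)=(1,0)$. For every $J\subseteq\{1,2\}$ the product $g_J(p)h_{J^c}(p)$ vanishes: if $1\in J$ then $g_J(p)=0$, and if $1\in J^c$ then $A_{J^c}(p)$ contains the zero row $\nabla g_1(p)$, forcing $h_{J^c}(p)=0$. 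Hence $p\in Z$. But there are no multipliers with $\nabla f(p)=\lambda_1\nabla g_1(p)+\lambda_2\nabla g_2(p)$, since that would put $(1,0)$ in the span of $\nabla g_1(p)=(0,0)$ and $\nabla g_2(p)=(0,1)$ with $\lambda_2=0$ forced by $\lambda_2g_2(p)=0$; so $p\notin C(f,K)$. What your argument actually proves is $C(f,K)\subseteq Z$ together with $Z\cap K\subseteq C(f,K)$, i.e.\ the two sets agree on $K$; equality on all of $\mathbb{R}^n$ would require linear independence of $\{\nabla g_j(x): j\in J_x\}$ at every $x\in\mathbb{R}^n$, a strictly stronger hypothesis than the paper's regularity of $K$. (The paper gives no proof here---it defers to Proposition 3.1 of \cite{HP2009}, where the analogous statement is a pure rank condition without this normalization issue---so the boundary case is glossed over there as well; a corrected statement should either strengthen the regularity assumption or restrict the equality to points of $K$.)
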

\begin{proof}
The proof is similar as that of Proposition 3.1 in \cite{HP2009} and therefore is omitted here.
\end{proof}

\section{Boundary Hessian Conditions, gradient ideals and KKT ideals} \label{BHC}

We say $f$ satisfies the BHC (boundary Hessian conditions) at the point $x^*$ in $K$ if there are some $k \in \{1,\ldots,n\}$, and $v_1,...,v_k \in \mathbb N$ with $1 \le v_1 < ... < v_k \le s$ such that $g_{v_1},\ldots,g_{v_k}$ are parts of a system of local parameters at $x^*$, and the standard sufficient conditions for a local minimum of $f|_L$ at $x^*$ hold, where $L$ is the subset of $\mathbb R^n$ defined by $g_{v_1}(x)\ge 0,\ldots,g_{v_k}(x)\ge 0$. This means that if $t_1,\ldots,t_n$ are local parameters at $x^*$ chosen so that $t_i=g_{v_i}$ for $i\le k$, then in the completion $\mathbb R[[t_1,\ldots,t_n]]$ of $\mathbb R[x]$ at $x^*$, $f$ decomposes as $f=f_0+f_1+f_2+\cdots$ (where $f_i$ is homogeneous of degree $i$ in the variables $t_1,\ldots,t_n$ with coefficients in $\mathbb R$), $f_1=a_1t_1+\cdots+a_kt_k$ with $a_i>0, i=1,\ldots,k$, and the $(n-k)$-dimensional quadratic form $f_2(0,\ldots,0,t_{k+1},\ldots,t_n)$ is positive definite.

\begin{theorem}[Marshall \cite{Marshall2009}]\label{Marshall}
If $f$ satisfies the BHC at each zero of $f$ in $K$, then $f\in P+\langle f^2\rangle$.
\end{theorem}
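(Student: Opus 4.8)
The proof is a \emph{local-to-global} argument: first produce a representation of $f$ inside the preordering in the completed local ring at each zero of $f$ in $K$, then patch these local data into a single global identity modulo $\langle f^2\rangle$. Throughout, the operative setting is $f\ge 0$ on $K$, so that the set $Z:=\{x\in K\mid f(x)=0\}$ consists exactly of the minimizers of $f$. The first observation is that $Z$ is finite: at each $x^*\in Z$ the BHC gives an expansion $f=a_1t_1+\cdots+a_kt_k+f_2+\cdots$ with $a_i>0$ and $f_2(0,\dots,0,t_{k+1},\dots,t_n)$ positive definite, which exhibits $x^*$ as an isolated corner minimum; hence the points of $Z$ are isolated, and being a real algebraic set they are finite in number.

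The heart of the argument is the local step, where the BHC is used. Fix $x^*\in Z$ and choose local parameters $t_1,\dots,t_n$ with $t_i=g_{v_i}$ for $i\le k$ as in the definition of the BHC. In the completion $\mathbb R[[t_1,\dots,t_n]]$ I would split off the terms divisible by $t_1,\dots,t_k$, writing $f=\sum_{i\le k}c_i(t)\,t_i+r(t_{k+1},\dots,t_n)$ with $c_i(0)=a_i>0$ and $r=f|_{t_1=\dots=t_k=0}$. Since every unit of $\mathbb R[[t]]$ with positive constant term is a square, each $c_i=\tau_i^2$, and as $t_i=g_{v_i}$ the first sum becomes $\sum_{i\le k}\tau_i^2\,g_{v_i}$, an element of the completed preordering. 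The remainder $r$ has lowest-order part the positive-definite form $f_2(0,\dots,0,t_{k+1},\dots,t_n)$, so by a formal Morse lemma it is a sum of squares $\sigma$. This yields the local representation $f=\sum_{i\le k}\tau_i^2\,g_{v_i}+\sigma\in\widehat P_{x^*}$; regularity of $K$ is what guarantees the $g_{v_i}$ may be taken as part of a system of local parameters.

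It remains to globalize. Away from $Z$ one has $f>0$ on $K$, while at the finitely many points of $Z$ we have just shown $f\in\widehat P_{x^*}$. Applying a local-global principle in the spirit of Scheiderer, the local memberships combine into an honest $p\in P$ that agrees with $f$ to the order prescribed by the vanishing of $f$ along $Z$, the discrepancy being divisible by $f^2$; this gives $f-p\in\langle f^2\rangle$, that is, $f\in P+\langle f^2\rangle$. I expect this globalization to be the main obstacle: the completion-level representations must be realized by genuine polynomials, which forces one to control the units and denominators introduced locally and to check that the gluing error lands in $\langle f^2\rangle$ rather than in some larger ideal. Because $K$ need not be compact, a naive partition of unity is unavailable, so one must rely on the finiteness of $Z$ together with a \L ojasiewicz-type estimate (or an abstract local-global theorem) to close the gap.
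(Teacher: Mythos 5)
The paper itself offers no proof of this statement: it is imported verbatim from Marshall \cite{Marshall2009} (where it is Theorem~2.1) and used later as a black box in the proof of Lemma~\ref{lemma4}. So your proposal must be measured against Marshall's own argument. Your local half essentially coincides with his: the BHC at a zero $x^*$ forces $x^*$ to be an isolated zero of $f$ in $K$, so the zero set $Z=\{x\in K\mid f(x)=0\}$ (a semi-algebraic set, not an algebraic one as you write, but this is harmless since semi-algebraic sets have finitely many connected components) is finite; and in $\mathbb R[[t_1,\ldots,t_n]]$ the splitting $f=\sum_{i\le k}\tau_i^2\,g_{v_i}+\sigma$, using that units with positive constant term are squares and that a series with positive definite lowest-order form is a sum of squares, gives exactly the local membership $f\in\hat P_{x^*}$ that Marshall proves. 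One side remark: the hypothesis ``$f\ge 0$ on $K$'' that you declare as the operative setting is not part of the statement, and, as explained below, it is genuinely not needed.

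The genuine gap is the globalization, which is the entire difficulty of the theorem and which you leave as an appeal to ``a local-global principle in the spirit of Scheiderer'' plus a \L ojasiewicz-type gluing. As stated, this fails: Scheiderer's local-global principle requires the preordering to be archimedean---equivalently, for finitely generated preorderings of $\mathbb R[x]$, that the associated closed set be compact---and $P$ is not archimedean here precisely because $K$ is non-compact; there is no theorem that patches completion-level representations into an element of $P$ with error in $\langle f^2\rangle$ in this setting, and no partition-of-unity or \L ojasiewicz argument substitutes for one. The idea you are missing is that $\langle f^2\rangle$ is not an error term to be engineered, but the device that restores compactness: set $P':=P+\langle f^2\rangle$, which is again a finitely generated preordering of $\mathbb R[x]$ (generated by $g_1,\ldots,g_s$ together with $\pm f^2$). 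Since $-f^2\in P'$, its associated closed set is $K_{P'}=K\cap Z(f)=Z$, which is finite by your first step, hence compact; by Schm\"udgen--W\"ormann, $P'$ is therefore archimedean. Now Scheiderer's local-global principle applies directly to $P'$: $f$ vanishes identically on $K_{P'}$, so $f\ge 0$ there with finitely many zeros, and at each $p\in Z$ your local step gives $f\in\hat P_p\subseteq\hat P'_p$; the conclusion is $f\in P'=P+\langle f^2\rangle$, which is the theorem. This also explains why no global non-negativity of $f$ on $K$ is required: non-negativity is only ever invoked on $K_{P'}$, where it is trivial.
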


\begin{example}\rm
Let $f,g_1\in\mathbb R[x,y,z]$ be given by $$f(x,y,z)=x; g_1(x,y,z)=x-y^2-z^2.$$ Then $$K=\{(x,y,z)\in\mathbb R^3\mid z-y^2-z^2\ge 0\}.$$
Clearly, $f\ge 0$ on $K$, and the unique zero of $f$ in $K$ occurs at $(0,0,0)$. Furthermore, $f$ satisfies the BHC at $(0,0,0)$. Indeed, let $t_1=g_1=x-y^2-z^2, t_2=y$ and $t_3=z$. These form a system of local parameters at $(0,0,0)$. Then $f=x=(x-y^2-z^2)+y^2+z^2$, so $f=f_1+f_2$, where $f_1(t_1,t_2,t_3)=t_1$, and $f_2(t_1,t_2,t_3)=t^2_2+t^2_3$. Also, the coefficient of $t_1$ in $f_1$ is positive (it is $1$), and $t_2, t_3$ do not appear in $f_1$. The quadratic form $f_2(0,t_2,t_3)=t^2_2+t^2_3$ is positive definite (when viewed as a quadratic form in the two variables $t_2, t_3$). So, according to the definition, $f$ satisfies the BHC at $(0,0,0)$. Here $f$ has a representation as follows:
$$f=\sigma_0+\sigma_1g_1+hf^2,$$
where $\sigma_0=y^2+z^2,\sigma_1=1,h=0$.
\end{example}

Now we define the {\it gradient ideal} of $f$ as follows:
$$I_{\grad}=\left\langle\frac{\partial f}{\partial x_1},\ldots,\frac{\partial f}{\partial x_n}\right\rangle.$$
Under the assumption that $I_{\grad}$ is radical, we have the following result.

\begin{theorem}[Nie-Demmel-Sturmfels \cite{NDS2006}]\label{NDS}
Suppose that
\begin{enumerate}
\item[(i)] $f\ge 0$ on $\mathbb R^n$,
\item[(ii)] $I_{\grad}$ is radical.
\end{enumerate}
Then $f$ is a sum of squares modulo $I_{\grad}$.
\end{theorem}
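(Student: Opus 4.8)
The plan is to pass to the complex gradient variety and reduce the claim to a purely geometric statement about $f$ there. First I would invoke radicality: by Theorem~\ref{strongnullstellensatz}, $I_{\grad}=I^{\mathbb R}(V)$ with $V:=V(I_{\grad})\subseteq\mathbb C^n$, so two real polynomials are congruent modulo $I_{\grad}$ exactly when they agree on $V$. Hence it suffices to produce a single real sum of squares $\sigma$ with $\sigma=f$ on $V$. The first genuine step is a \emph{constancy lemma}: since every $\partial f/\partial x_j$ lies in $I_{\grad}$ and therefore vanishes on $V$, the differential of $f$ vanishes along each irreducible component $V_i$; as $V_i$ is irreducible, hence connected, $f$ is constant on it, say $f\equiv c_i$ on $V_i$.

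Next I would organize the components by their value. If $V_i$ contains a real point $\xi$, then $c_i=f(\xi)\ge 0$. Two components that meet must carry the same value, so components with different values are disjoint; grouping them yields a decomposition $V=\bigsqcup_w Z_w$ into pairwise disjoint Zariski-closed sets indexed by the finitely many values $w$, compatible with conjugation ($Z_{\bar w}=\overline{Z_w}$). The groups meeting $\mathbb R^n$ are exactly those with $w\ge 0$; the remaining groups, with $w$ real negative or non-real, satisfy $Z_w\cap\mathbb R^n=\emptyset$. Disjointness together with the weak Nullstellensatz (Theorem~\ref{weaknullstellensatz}) lets me split $1$ across the $Z_w$ and produce idempotents $e_w$ modulo $I_{\grad}$, real whenever $Z_w$ is conjugation-stable, with $e_w\equiv 1$ on $Z_w$ and $e_w\equiv 0$ on the other groups; then $f\equiv\sum_w w\,e_w\pmod{I_{\grad}}$.

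Finally I would assemble the sum of squares. For a nonnegative value $w=a\ge 0$ the real idempotent gives $a\,e_w\equiv a\,e_w^2=(\sqrt{a}\,e_w)^2\pmod{I_{\grad}}$, a single square. For the union $Z_{\mathrm{bad}}$ of the remaining groups I would use that $Z_{\mathrm{bad}}\cap\mathbb R^n=\emptyset$, so by the real Nullstellensatz (see \cite{BCR}) one has $-1\equiv\sigma_0\pmod{I^{\mathbb R}(Z_{\mathrm{bad}})}$ for some sum of squares $\sigma_0$; the identity $p=\left(\tfrac{p+1}{2}\right)^2+(-1)\left(\tfrac{p-1}{2}\right)^2$ then rewrites $f$ as a sum of squares modulo $I^{\mathbb R}(Z_{\mathrm{bad}})$. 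Multiplying by $e_{\mathrm{bad}}^2$, which keeps the expression a sum of squares and, since $I_{\grad}$ is radical and $e_{\mathrm{bad}}$ vanishes off $Z_{\mathrm{bad}}$, carries the congruence back to modulo $I_{\grad}$, gives $e_{\mathrm{bad}}f\equiv\text{SOS}$. Adding the two contributions yields $f\equiv\text{SOS}\pmod{I_{\grad}}$.

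The hard part, and the step I expect to be the main obstacle, is precisely the components of $V$ carrying negative or non-real constant values: the naive square $a\,e_w=(\sqrt{a}\,e_w)^2$ is unavailable there, and one must import the real Nullstellensatz to force $-1$ to be a sum of squares modulo the ideal of a variety with no real points. A secondary technical point is the bookkeeping that transports congruences between the local ideals $I^{\mathbb R}(Z_w)$ and the global $I_{\grad}$, which is exactly what radicality (through $I_{\grad}=I^{\mathbb R}(V)$) and the disjointness of the $Z_w$ are there to guarantee.
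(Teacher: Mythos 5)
Your proposal is correct and is essentially the same argument as the source: the paper itself quotes this theorem from \cite{NDS2006} without reproving it, and your steps (constancy of $f$ on each irreducible component, grouping components by value, squares of real idempotents for the nonnegative values, $-1\equiv$ SOS modulo the ideal of the part with no real points, and radicality plus the Nullstellensatz to conclude) are exactly the strategy of \cite{NDS2006}, which the paper replays in its own proof of the radical critical-ideal analogue in Section 5. The only cosmetic differences are that the paper uses separating polynomials $p_i$ with $p_i(W_j)=\delta_{ij}$ (Lemma 1 of \cite{NDS2006}) instead of Chinese-remainder idempotents, and your claim that the groups meeting $\mathbb{R}^n$ are \emph{exactly} those with $w\ge 0$ should be weakened to ``contained in'' (a component carrying a nonnegative value need not contain a real point), a slip that is harmless because your treatment of the $w\ge 0$ groups never uses real points.
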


If we replace the radical condition of $I_{\grad}$ by an another condition that $f$ satisfies the BHC at each zero of $f$, then we will have the following result.

\begin{theorem}[Marshall \cite{Marshall2009}]\label{Marshall2}
Suppose that
\begin{enumerate}
\item[(i)] $f\ge 0$ on $\mathbb R^n$,
\item[(ii)] $f$ satisfies the BHC at each zero of $f$.
\end{enumerate}
Then $f$ is a sum of squares modulo $I_{\grad}$.
\end{theorem}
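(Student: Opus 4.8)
The plan is to adapt the strategy behind Theorem~\ref{NDS}, locating the one step where radicality is used there and showing that the boundary Hessian conditions supply the same local information. Throughout, write $\Sigma$ for the cone of sums of squares in $\mathbb R[x]$; the assertion to be proved is that $f\in\Sigma+I_{\grad}$. It is worth noting at the outset that Theorem~\ref{Marshall} applied with $K=\mathbb R^n$ yields only $f\in\Sigma+\langle f^2\rangle$, and since $\langle f^2\rangle\not\subseteq I_{\grad}$ in general (on $V(I_{\grad})$ the polynomial $f$ equals its critical values, so $f^2$ does not vanish on $V(I_{\grad})$ once some critical value is nonzero), there is no direct reduction from that result; a genuinely local-to-global argument is required.

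I would first pass to the complex gradient variety $V=V(I_{\grad})$ and decompose it into irreducible components $V=\bigcup_iV_i$ in the sense fixed in Section~2. Since every $\partial f/\partial x_j$ vanishes on $V$, the differential of $f$ vanishes along each $V_i$, so $f$ is constant on the connected set $V_i$, say $f\equiv c_i$; as each $V_i$ is defined over $\mathbb R$ and hence stable under complex conjugation, each critical value $c_i$ is real. The hypothesis $f\ge 0$ on $\mathbb R^n$ forces $c_i\ge 0$ on every component carrying a real point, and the real zeros of $f$ are exactly the real points of the components with $c_i=0$.

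Grouping the components by critical value and separating distinct values by interpolation modulo $I_{\grad}$ (idempotents lift through the nilpotent ideal $\sqrt{I_{\grad}}/I_{\grad}$), I would reduce the global statement to producing a sum-of-squares representation of $f$ modulo the localization of $I_{\grad}$ at each point of $V$, and then reassembling. The components with $c_i>0$ pose no difficulty, since there $f$ is bounded away from $0$ and is therefore locally a square; the components carrying no real point are handled by Theorem~\ref{weaknullstellensatz}, the relevant partials generating the unit ideal after localization, and so contribute no obstruction. The crux is the analysis on the components with $c_i=0$ that carry real points, i.e.\ at the real zeros of $f$, and this is precisely where Theorem~\ref{NDS} uses radicality and where I would instead invoke the BHC. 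Fixing a zero $x^*$ and noting that $K=\mathbb R^n$ has no active constraints, the BHC at $x^*$ reduces to the statement that, in local parameters $t_1,\dots,t_n$, the expansion $f=f_2+f_3+\cdots$ in $\mathbb R[[t_1,\dots,t_n]]$ has vanishing linear part and positive-definite quadratic part $f_2$; that is, $x^*$ is a nondegenerate local minimum with $f(x^*)=0$.

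The main obstacle, and the technical heart of the argument, is to turn this formal, positive-definite local data into an honest polynomial certificate and to glue the finitely many local certificates into one global identity $f=\sigma+\sum_jp_j\,\partial f/\partial x_j$. Locally I would complete the square on $f_2$ and absorb the higher-order terms $f_3+f_4+\cdots$ modulo the gradient ideal, whose generators $\partial f/\partial t_j$ have the nondegenerate linear parts $\partial f_2/\partial t_j$; the positive-definiteness of $f_2$ is what guarantees that the square-completion is a genuine rather than merely formal sum of squares and provides the stability needed to pass from a sufficiently high truncation back to an exact representation modulo $I_{\grad}$. Finally I would recombine these local certificates with the interpolation idempotents from the reduction step, pushing all cross terms into $I_{\grad}$, to obtain $f\in\Sigma+I_{\grad}$ as required.
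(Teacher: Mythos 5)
Your overall skeleton --- decompose $V(I_{\grad})$ into irreducible components, use constancy of $f$ on components, group by critical value, and glue with the Chinese remainder theorem --- is exactly the skeleton the paper uses to prove its generalization, Theorem~\ref{main1} (the paper itself only quotes the present statement from Marshall). But two of your three local steps have genuine problems. First, the components without real points cannot be handled by Theorem~\ref{weaknullstellensatz}: those components are nonempty complex varieties, so the corresponding ideal $J_0$ is proper, and the partial derivatives, far from generating the unit ideal after localization, all \emph{vanish} on these components. Worse, $f$ can be a strictly negative constant there, so no purely complex-algebraic statement can help. Take $n=1$ and $f=(x^2+1)^2+x^2$: then $f\ge 1$ on $\mathbb R$, $I_{\grad}=\langle 2x(2x^2+3)\rangle$, and on the component $V(2x^2+3)$ one has $f\equiv -5/4$. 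What saves this case is the real Nullstellensatz/Positivstellensatz (Lam's Theorem 8.6, as used in the paper's Lemma~\ref{lemma2}): since $V(J_0)$ has no real points, $-1$ is a sum of squares modulo $J_0$ (here $-1\equiv \frac{2}{3}x^2 \bmod \langle 2x^2+3\rangle$), and hence every polynomial, in particular $f$, is congruent to a sum of squares modulo $J_0$. Citing the complex weak Nullstellensatz here is not a small slip; it is the wrong theorem, and your step fails without the real one.

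Second, at the components where $f$ vanishes you discard Theorem~\ref{Marshall} and propose to reprove the local statement by completing the square in $\mathbb R[[t_1,\ldots,t_n]]$ and appealing to unexplained ``stability'' to convert a formal, truncated identity into a polynomial identity modulo $I_{\grad}$. That conversion is precisely the content of Marshall's theorem (resting on Scheiderer's local-global principle) and is the hardest part of the whole story; it cannot be waved through. Moreover, your reason for discarding Theorem~\ref{Marshall} --- that $\langle f^2\rangle\not\subseteq I_{\grad}$ --- misses how that theorem is actually used: one applies it only modulo the ideal $J_r$ attached to the zero-value components, where $f$ is nilpotent ($f^m\in J_r$ by Theorem~\ref{strongnullstellensatz}). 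Writing $f=g+hf^2$ with $g$ a sum of squares, the factor $1-hf$ is invertible modulo $J_r$ with inverse $v=\sum_{i=0}^{m-1}(hf)^i$, which is itself congruent to a square modulo $J_r$ by the binomial-series trick, so $f\equiv f(1-hf)v=gv$ is a sum of squares modulo $J_r$; this is the paper's Lemma~\ref{lemma4}. So the correct repair of your argument is to keep your decomposition, replace the weak Nullstellensatz by Lam's theorem on the real-point-free components, and reinstate Theorem~\ref{Marshall} on the zero components instead of a from-scratch local analysis.
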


Similar to generalization of the gradient ideal, we define the {\it KKT ideal} of $f$ as follows:
$$I_{\KKT}=\langle F_1,\ldots,F_n,\lambda_1g_1,\ldots,\lambda_sg_s\rangle,$$
where $$F_i=\frac{\partial f}{\partial x_i}-\sum_{j=1}^s\lambda_j\frac{\partial g_j}{\partial x_i}, \forall i=1,\ldots,n.$$

Two following results are generalizations of theorem \ref{NDS} and theorem \ref{Marshall2} in the same way.

\begin{theorem}[Demmel-Nie-Powers \cite{NDP2007}]\label{DNP1}
Suppose that
\begin{enumerate}
\item[(i)] $f\ge 0$ on $K$,
\item[(ii)] $I_{\KKT}$ is radical.
\end{enumerate}
Then $f\in P+I_{\KKT}$.
\end{theorem}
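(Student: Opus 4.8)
The plan is to transport the Nie--Demmel--Sturmfels argument for Theorem \ref{NDS} from the gradient ideal to the KKT ideal, using the preordering $P$ to carry the sign information that the constraints $g_i\ge 0$ encode. I would work in $\mathbb R[x,\lambda]$ and study $f$ through the quotient $B=\mathbb R[x,\lambda]/I_{\KKT}$, which is reduced by the radical hypothesis.

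The first step is the algebraic heart of the NDS-type results: $f$ is constant on every irreducible component of the complex variety $V(I_{\KKT})$. On such a component $W$, irreducibility together with $\lambda_jg_j=0$ forces, for each $j$, either $g_j\equiv 0$ on $W$ or $\lambda_j\equiv 0$ on $W$. Differentiating along a smooth path $(x(\tau),\lambda(\tau))\subseteq W$ and using $F_i=0$, which gives $\nabla_xf=\sum_j\lambda_j\nabla_xg_j$ on $W$, one obtains
\[
\frac{d}{d\tau}\,f(x(\tau))=\sum_{j=1}^s\lambda_j\,\frac{d}{d\tau}\,g_j(x(\tau))=0,
\]
since on each part of the index partition the corresponding summand vanishes. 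Hence $f$ takes only finitely many values $a_1,\ldots,a_r\in\mathbb C$ on $V(I_{\KKT})$; the real polynomial $\prod_{i}(f-a_i)$ (the value set is conjugation-stable because $f$ and $I_{\KKT}$ are real) vanishes on $V(I_{\KKT})$, so by Theorem \ref{strongnullstellensatz} and radicality it lies in $I_{\KKT}$.

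Next I would exploit the relation $\prod_i(f-a_i)\equiv 0$ in $B$ to decompose $f$. Since the $a_i$ are distinct and $B$ is reduced, the subring $\mathbb R[\bar f]\subseteq B$ is isomorphic to $\mathbb R[t]/(m)$ for a squarefree divisor $m$ of $\prod_i(t-a_i)$, hence to a finite product of copies of $\mathbb R$ (real values) and of $\mathbb C$ (conjugate pairs of complex values), with orthogonal idempotents $\theta_i=q_i(\bar f)$ obtained by Lagrange interpolation. In this product every complex value is a square and every nonnegative real value $a_i=b_i^2$ contributes the square $(b_i\theta_i)^2$, with conjugate idempotents combining to genuine real sums of squares; thus the part of $f$ supported on the complex components and on the real components that meet $K$ is automatically a sum of squares modulo $I_{\KKT}$. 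Here the hypothesis $f\ge 0$ on $K$ enters decisively: a component carrying a real point of $K$ has $a_i=f\ge 0$, so its value is indeed a square.

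The remaining case, which I expect to be the main obstacle, is a component $W$ with a negative real value $a_i<0$. By the previous remark all real points of $W$ lie outside $K$, so at each of them some $g_j$ with $j\notin J_W:=\{j:g_j|_W\equiv 0\}$ is strictly negative. The plain SOS construction fails for such $a_i$, and the preordering must supply the sign: using the idempotent $\theta_i$ to localize at $W$ and the generators $g_j$ that are negative there, I would build a preordering element $\sum_e\sigma_eg_1^{e_1}\cdots g_s^{e_s}$ congruent to $a_i\theta_i$ modulo $I_{\KKT}$, and then assemble the component-wise pieces into a single representation $f\in P+I_{\KKT}$. The delicate points are to produce genuine sums-of-squares coefficients $\sigma_e$ realizing the prescribed negative constant on $W$ while vanishing on the other components, and to control the cross terms $\theta_i\theta_j$ so that the glued expression still lies in $P+I_{\KKT}$; this gluing, absent in the unconstrained Theorem \ref{NDS}, is exactly where the constrained statement demands new work.
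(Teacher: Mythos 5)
Your first half is sound and matches the standard Demmel--Nie--Powers argument (which this paper only cites; its own Theorem \ref{main1} and the radical critical-ideal theorem reproduce the same skeleton): constancy of $f$ on each irreducible component of $V(I_{\KKT})$ via the dichotomy $g_j\equiv 0$ or $\lambda_j\equiv 0$ forced by irreducibility, finitely many values, passage to the reduced quotient, and the observation that non-real values and non-negative real values contribute only squares. The gluing you worry about at the end is also not a real obstacle: since the $\theta_i$ are orthogonal idempotents, $h^2\theta_i\equiv (h\theta_i)^2$ and $\sigma_e g_1^{e_1}\cdots g_s^{e_s}\theta_i\equiv(\text{square})\,g_1^{e_1}\cdots g_s^{e_s} \bmod I_{\KKT}$, so preordering representations on the separate factors assemble automatically; this is exactly the content of the paper's Lemma \ref{lemma5}, and it is routine.

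The genuine gap is the case you flag and then leave as an aspiration: a group of components on which $f$ takes a negative real value $a_i<0$ (by hypothesis (i), no real point of such a component projects into $K$). You say you ``would build'' a preordering element congruent to $a_i\theta_i$ there, but give no construction, and nothing short of a Positivstellensatz will produce it: the certificate must encode the emptiness of a semi-algebraic set, not just algebraic vanishing. Concretely, let $J_i$ be the ideal of the union of those components. Since the system $g_1\ge 0,\ldots,g_s\ge 0,\ h=0\ (h\in J_i)$ has no real solution, the Krivine--Stengle Positivstellensatz (Lam's Theorem 8.6, which is precisely what the paper invokes in Lemma \ref{lemma2}) gives $u_i\in P$ with $-1\equiv u_i\bmod J_i$; then the identity $f=(f+\tfrac{1}{2})^2-(f^2+\tfrac{1}{4})$ yields $f\equiv (f+\tfrac{1}{2})^2+u_i(f^2+\tfrac{1}{4})\bmod J_i$, an element of $P+J_i$, and gluing with your other factors closes the proof. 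Note that this single device also absorbs all components without real points, so your separate treatment of non-real values, while correct, is unnecessary; and it is not really ``new work'' relative to Theorem \ref{NDS} --- the unconstrained proof uses the same step in its weaker form (empty real variety implies $-1$ is a sum of squares modulo the ideal). Without this step your argument does not terminate.
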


\begin{theorem}[Hiep \cite{Dang2009}]\label{Dang}
Suppose that
\begin{enumerate}
\item[(i)] $f\ge 0$ on $K$,
\item[(ii)] $f$ satisfies the BHC at each zero of $f$ in $K$.
\end{enumerate}
Then $f\in P+I_{\KKT}$.
\end{theorem}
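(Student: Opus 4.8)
The plan is to transport Marshall's proof of the unconstrained theorem \ref{Marshall2} to the constrained setting, with the real KKT variety $V_{\KKT}$ playing the role of the gradient variety and the preordering $P$ playing the role of sums of squares. First I would lift everything to $\mathbb R[x,\lambda]$ and record two preliminary facts. By the Karush--Kuhn--Tucker theorem together with the regularity of $K$, exactly as in the proof of Proposition \ref{var1}(ii), every zero $x^*$ of $f$ in $K$ is a minimizer and hence lifts to a genuine point $(x^*,\lambda^*)\in V_{\KKT}$. The second fact is that $f$, regarded as a function on $\mathbb R^n\times\mathbb R^s$, is constant on each irreducible component of the complex variety $V(I_{\KKT})$. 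I would prove this by a differential computation at smooth points: the relations $F_i=0$ give $df=\sum_i\lambda_i\,dg_i$ on the tangent space, while differentiating $\lambda_ig_i=0$ yields $\lambda_i\,dg_i=-g_i\,d\lambda_i$; since the complementarity $\lambda_ig_i=0$ forces $\lambda_i=0$ or $g_i=0$ at every point, each term $\lambda_i\,dg_i$ vanishes, so $df\equiv 0$ along the component and $f$ is constant there.

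Next I would treat the zeros of $f$ locally. The boundary Hessian conditions make $f$ grow linearly in the active constraints and positive-definitely transverse to them, so the zeros of $f$ in $K$ are isolated and, lying on the finitely many components where $f$ vanishes, finite in number. At such a zero, with $t_i=g_{v_i}$ the active constraints completed to local parameters, the BHC decomposition $f=a_1t_1+\cdots+a_kt_k+f_2+\cdots$ with $a_i>0$ and $f_2$ positive definite is precisely the data needed to write, in the completed local ring, $f\equiv a_1g_{v_1}+\cdots+a_kg_{v_k}+(\text{sum of squares})\pmod{I_{\KKT}}$: the linear part is a positive combination of the $g_{v_i}$ and so lies in $P$, and positive definiteness turns the remaining even part into a sum of squares. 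This is the constrained analogue of Marshall's key lemma; one may either reprove it with the multipliers present or feed the BHC hypothesis into Theorem \ref{Marshall} to get $f\in P+\langle f^2\rangle$ and then refine that representation along $V_{\KKT}$.

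Finally I would globalize. By the constancy above, the components of $V(I_{\KKT})$ split into those on which $f=0$, which carry the lifted zeros and are handled by the local BHC certificates, and those on which $f$ equals a nonzero constant $c$. Where the real points of such a component project into $K$ one has $c>0$ by hypothesis (i), so $c=(\sqrt c)^2$ is a square; components projecting outside $K$ impose no sign constraint, since a preordering element is unrestricted off $K$ and the matching element of $P$ may there be chosen freely. Patching the finitely many local certificates with these component-wise contributions by a Chinese-remainder argument in $\mathbb R[x,\lambda]/I_{\KKT}$ should assemble a single $\sigma\in P$ with $f-\sigma\in I_{\KKT}$, which is the asserted $f\in P+I_{\KKT}$.

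The hard part will be this last globalization in the presence of a non-radical $I_{\KKT}$. Unlike Theorem \ref{DNP1}, the quotient $\mathbb R[x,\lambda]/I_{\KKT}$ may have nilpotents, so the clean idempotent and Nullstellensatz separation of components is unavailable; one must instead descend the formal power-series certificates produced by the BHC to honest polynomial ones and reconcile them across components, while accounting for the infeasible KKT points at which $f$ need not be non-negative. This descent is exactly where the BHC hypothesis must do the work that radicality did in Theorem \ref{DNP1}.
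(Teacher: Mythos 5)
Your skeleton is the right one: it is essentially the plan that \cite{Dang2009} carries out for this statement and that the present paper carries out for Theorem \ref{main1} (the paper itself only cites Theorem \ref{Dang}; its own version of the argument is the proof of Theorem \ref{main1}). Decomposing the KKT variety into irreducible components, proving $f$ is constant on each component (your differential argument with $\lambda_i\,dg_i=-g_i\,d\lambda_i$ is exactly the standard one), splitting the components into those missing $K$, those where $f$ is a positive constant, and those where $f$ vanishes, and patching by a Chinese-remainder argument --- all of that matches. But the proposal has a genuine gap, and you locate it yourself in the last paragraph: you declare the non-radical case ``the hard part,'' propose to handle it by descending formal power-series certificates, and stop. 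That descent is not how the proof goes, and your diagnosis of the obstruction is wrong. The Chinese-remainder separation is \emph{not} unavailable when $I_{\KKT}$ fails to be radical: one fixes a primary decomposition of $I_{\KKT}$, groups the primary components into ideals $J_0,J_1,\ldots,J_r$ according to the constant value of $f$ on the associated varieties, and notes that $V(J_i)\cap V(J_j)=\emptyset$ forces $J_i+J_j=\mathbb R[x,\lambda]$ by the weak Nullstellensatz alone --- comaximality needs no radicality --- so CRT applies; moreover the patching stays inside $P$ by writing $1=u_0+u_1$ with $u_i\in J_i$ and forming $q=u_0^2q_1+u_1^2q_0$, as in Lemma \ref{lemma5}.

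What replaces radicality is a pair of algebraic devices modulo the (possibly non-radical) $J_i$, and these are missing from your proposal. On a component where $f\equiv a_i>0$, it is not enough that $a_i=(\sqrt{a_i})^2$: modulo $J_i$ one only knows $f/a_i=1+u$ with $u^k\in J_i$ (strong Nullstellensatz), and one must extract a square root of a unipotent element via the truncated binomial series, $1+u\equiv\bigl(\sum_{j=0}^{k-1}c_ju^j\bigr)^2\bmod J_i$ with rational $c_j$, as in Lemma \ref{lemma3}. On the components where $f\equiv 0$, the BHC enters only through Theorem \ref{Marshall} used as a black box: $f=g+hf^2$ with $g\in P$, hence $f(1-hf)=g$; since $f^m\in J_i$, the element $1-hf$ is invertible modulo $J_i$ with inverse the geometric series $v=\sum_{j=0}^{m-1}(hf)^j$, and $v$ is in turn congruent to a square by the same binomial trick, giving $f\equiv gv\in P\bmod J_i$ as in Lemma \ref{lemma4} --- no local power-series analysis and no finiteness of the zero set is needed (which is just as well, since your finiteness claim is unjustified: BHC makes zeros isolated, but on a non-compact $K$ isolated does not imply finitely many). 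Finally, for the components whose real points miss the lift of $K$, ``the matching element of $P$ may be chosen freely'' needs an actual tool, namely the Positivstellensatz: it yields $u_0\in P$ with $-1\equiv u_0\bmod J_0$, whence $f=(f+\frac{1}{2})^2-(f^2+\frac{1}{4})\equiv(f+\frac{1}{2})^2+u_0(f^2+\frac{1}{4})\in P\bmod J_0$, as in Lemma \ref{lemma2}. Without these three devices the globalization you defer never closes, so as it stands the proposal does not constitute a proof.
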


\begin{remark}\rm
The radical condition and the BHC condition are different. This means that there exist polynomials which satisfy the radical condition, but do not satisfy the BHC condition and conversely. The following example will demonstrate this difference.
\end{remark}

\begin{example}[Marshall \cite{Marshall2009}]\rm
\begin{itemize}
\item[]
\item[1.] Let $n=1$ and $s=0$ (so that $K=\mathbb R$). Then the polynomial in one variable $f(x)=6x^2+8x^3+3x^4$ satisfies the BHC condition, but it does not satisfy the radical condition. Indeed, $\displaystyle\frac{\partial f}{\partial x}=12x(x+1)^2$, $f(x)\ge 0$ on $\mathbb R$, $f$ has a zero at $x=0$, and $\displaystyle\frac{\partial^2f}{\partial x^2}(0)=12>0$. However, the gradient ideal $I=\langle 12x(x+1)^2\rangle$ which also is the KKT ideal, is not radical, because $g(x)=x(x+1)\in\sqrt{I}$, but $g\not\in I$.
\item[2.] Let $n=2$ and $s=0$ (so that $K=\mathbb R^2$). Then the polynomial in two variables $f(x,y)=x^2$ does not satisfy the BHC condition, but it satisfies the radical condition. Indeed, the Hessian matrix of $f$ is not positive definite at any zero of $f$ in $K$. However, the gradient ideal $I=\langle 2x\rangle$ which also is the KKT ideal, is radical.
\end{itemize}
\end{example}

\begin{remark}\rm
If we leave both the radical condition and the BHC condition, then we will have the corresponding representations of strictly positive polynomials.
\end{remark}

\begin{theorem}[Nie-Demmel-Sturmfels \cite{NDS2006}]
If $f>0$ on $\mathbb R^n$, then $f$ is a sum of squares modulo $I_{\grad}$.
\end{theorem}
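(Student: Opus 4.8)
The plan is to obtain this statement as an immediate special case of Theorem~\ref{Marshall2}, exploiting the fact that strict positivity erases the only points at which a boundary Hessian condition would ever have to be verified. Concretely, I would first note that $f>0$ on $\mathbb R^n$ trivially gives $f\ge 0$ on $\mathbb R^n$, so hypothesis (i) of Theorem~\ref{Marshall2} holds. Next I would observe that, again because $f>0$ everywhere, the zero set of $f$ in $\mathbb R^n$ is empty; hence the requirement ``$f$ satisfies the BHC at each zero of $f$'' is vacuously true, there being no point at which anything must be checked. With both hypotheses of Theorem~\ref{Marshall2} in force, I would conclude that $f$ is a sum of squares modulo $I_{\grad}$. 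This is the reduction advertised in the preceding remark: dropping both the radicality and the BHC requirements is legitimate precisely because strict positivity makes the BHC condition empty.

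The one point that genuinely needs care — and the step I regard as the main obstacle — is to confirm that Theorem~\ref{Marshall2} as invoked carries no hidden nonemptiness or attainment assumption: in particular, that it does not secretly require $f$ to have a real zero or to attain its infimum on $\mathbb R^n$. This matters because the characteristic feature of the strictly positive case is exactly that the global infimum need not be attained; for instance $f(x,y)=x^2+(xy-1)^2$ is strictly positive yet has infimum $0$, approached only along $y=1/x$ as $x\to 0$, so one cannot fall back on a minimizer. I would therefore check explicitly that Theorem~\ref{Marshall2} is stated with no such restriction, so that the vacuous verification of (ii) is valid irrespective of whether the real gradient variety is empty or the infimum is attained. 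Once this is confirmed, the deduction is complete.

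For completeness I would also record the self-contained route, namely the original argument of Nie--Demmel--Sturmfels, which bypasses the BHC formalism. It rests on the observation that $f$ is constant on each connected component of the real gradient variety $V^{\mathbb R}(I_{\grad})$ — along any smooth arc $\gamma$ in that variety the derivative of $f$ is $\nabla f\cdot\gamma'$, which vanishes — together with the fact that a real algebraic set has only finitely many connected components. Hence $f$ takes finitely many values on $V^{\mathbb R}(I_{\grad})$, all strictly positive by hypothesis, giving a uniform positive lower bound for $f$ on the (possibly non-compact) gradient variety; one then assembles the certificate from these finitely many positive critical values by an interpolation/idempotent construction separating the corresponding groups of components. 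The delicate part of this direct route, and my reason for preferring the one-line reduction above, is the passage to a denominator-free representation modulo $I_{\grad}$ \emph{itself} rather than modulo its real radical: when $I_{\grad}$ is not radical the quotient $\mathbb R[x]/I_{\grad}$ carries nilpotents, and the square-root/idempotent construction must be made exact modulo $I_{\grad}$, not merely modulo the real radical of $I_{\grad}$. This is precisely the nilpotent bookkeeping that strict positivity is designed to absorb and that the BHC hypothesis of Theorem~\ref{Marshall2} handles uniformly, so reducing to that theorem sidesteps the difficulty entirely.
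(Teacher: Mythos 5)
Your reduction is correct: since $f>0$ on $\mathbb R^n$, the zero set of $f$ in $\mathbb R^n$ is empty, so hypothesis (ii) of Theorem \ref{Marshall2} holds vacuously, hypothesis (i) is immediate, and the conclusion follows. Your concern about hidden assumptions is also resolved affirmatively: Theorem \ref{Marshall2}, both as stated here and as proved by Marshall in \cite{Marshall2009}, requires neither that $f$ have a real zero nor that its infimum be attained, so the vacuous verification is legitimate (your example $x^2+(xy-1)^2$ is precisely the standard illustration of non-attainment). Note, however, that your route differs from the paper's treatment: the paper offers no proof at all, quoting the statement as a known result of Nie--Demmel--Sturmfels \cite{NDS2006}, just as it quotes Theorem \ref{Marshall2} from \cite{Marshall2009}; within that structure your deduction of one quoted fact from another is unobjectionable and non-circular, even though it inverts the historical order (Marshall's proof generalizes the Nie--Demmel--Sturmfels decomposition argument rather than invoking the strictly positive theorem as a lemma). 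Interestingly, the paper itself does not exploit the vacuity trick where it could: its own strictly positive result, Theorem \ref{main2}, is proved by re-running the decomposition argument of Theorem \ref{main1} with all values $a_i>0$, although by your argument it is an immediate special case of Theorem \ref{main1}. One caution about your sketched ``self-contained route'': the Nie--Demmel--Sturmfels argument decomposes the \emph{complex} variety $V(I_{\grad})$ into irreducible components and works with a primary decomposition of $I_{\grad}$, handling components without real points by the fact that $-1$ is a sum of squares modulo the corresponding ideal; it does not proceed via connected components of the real variety $V^{\mathbb R}(I_{\grad})$ as you describe, and the ``uniform positive lower bound'' plays no role --- what matters is constancy of $f$ on each complex component. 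Since you flag that route as secondary and rest your proof on the reduction to Theorem \ref{Marshall2}, this imprecision does not affect the validity of your main argument.
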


\begin{theorem}[Demmel-Nie-Powers \cite{NDP2007}]\label{DNP2}
If $f>0$ on $K$, then $f\in P+I_{\KKT}$.
\end{theorem}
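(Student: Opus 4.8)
The plan is to obtain this strictly-positive statement as an immediate specialization of Theorem \ref{Dang}, which is already at our disposal. Theorem \ref{Dang} guarantees $f\in P+I_{\KKT}$ as soon as (i) $f\ge 0$ on $K$ and (ii) $f$ satisfies the BHC at each zero of $f$ in $K$. So the whole task reduces to checking that the single hypothesis $f>0$ on $K$ forces both of these conditions, after which one simply invokes Theorem \ref{Dang}.

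First I would note that $f>0$ on $K$ trivially gives $f\ge 0$ on $K$, so hypothesis (i) holds. The only point worth dwelling on is hypothesis (ii): since $f$ is \emph{strictly} positive on $K$, the set $\{x\in K\mid f(x)=0\}$ of zeros of $f$ lying in $K$ is empty, so the requirement ``$f$ satisfies the BHC at each zero of $f$ in $K$'' is a statement universally quantified over the empty set and is therefore vacuously true. Applying Theorem \ref{Dang} then yields $f\in P+I_{\KKT}$, exactly as claimed. Along this route there is essentially no obstacle; the only care needed is the vacuity argument, which in turn rests on $K=\{g_1\ge 0,\ldots,g_s\ge 0\}$ being closed, so that ``no zeros in $K$'' literally means the emptiness of the displayed set.

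Should one instead want a proof independent of Theorem \ref{Dang} (as in the original treatment, where the strictly-positive case logically precedes the BHC refinement), I would argue directly on the KKT variety. The backbone would be the structural fact that $f$ is constant on each irreducible component of the complex variety $V(I_{\KKT})$: along the smooth locus of a component, the Lagrange relation $\nabla f=\sum_{i}\lambda_i\nabla g_i$ together with $\lambda_i g_i=0$ forces the directional derivatives of $f$ in tangent directions to vanish, so $f$ is locally, hence globally, constant there. One then sorts the components by the constant value $f$ takes on them, uses strict positivity of $f$ on $K$ to ensure that every component whose real points project into $K$ carries a \emph{positive} value, and assembles the certificate from an interpolating polynomial equal to $1$ on the positive-valued components together with the generators $g_i$ (negative on the remaining real points) modulo the relations of $I_{\KKT}$. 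In this self-contained route the main obstacle is precisely this assembly step: converting component-wise constancy into a single identity $f=\sigma+q$ with $\sigma\in P$ and $q\in I_{\KKT}$, while correctly disposing of the components of $V(I_{\KKT})$ that are non-real or whose real points fall outside $K$. Showing that the generators appearing in $P$ absorb those ``bad'' components modulo $I_{\KKT}$ is the delicate part, and it is exactly this difficulty that the reduction to Theorem \ref{Dang} sidesteps.
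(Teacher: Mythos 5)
Your primary route is correct: since $f>0$ on $K$, the set $\{x\in K\mid f(x)=0\}$ is empty, so the BHC hypothesis of Theorem \ref{Dang} is vacuously satisfied, $f\ge 0$ holds trivially, and Theorem \ref{Dang} yields $f\in P+I_{\KKT}$. (One small quibble: closedness of $K$ plays no role in the vacuity; emptiness of the zero set follows from strict positivity alone.) This is, however, a genuinely different route from the one in the literature and from how this paper treats its own analogue. Theorem \ref{DNP2} is cited from \cite{NDP2007}, where it is proved directly and historically \emph{precedes} the BHC result of \cite{Dang2009}; likewise, when the present paper proves its strictly-positive analogue, Theorem \ref{main2}, it does not specialize Theorem \ref{main1} via vacuous BHC but re-runs the decomposition argument: decompose the variety into irreducible components, observe that now all the constant values satisfy $a_1>\cdots>a_r>0$, and conclude using only Lemmas \ref{lemma2}, \ref{lemma3} and \ref{lemma5} --- the Marshall/BHC step (Lemma \ref{lemma4}) drops out entirely. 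Your specialization buys a one-line proof, but at the cost of invoking a strictly stronger and logically later theorem whose own proof requires Marshall's Theorem \ref{Marshall}; the direct route is more elementary and self-contained, which is why the original ordering puts the strictly-positive case first. Within the logical framework of this paper (where Theorem \ref{Dang} is available as a black box and its proof does not rely on Theorem \ref{DNP2}) your derivation is non-circular and valid; your second, independent sketch is essentially the original argument, and you correctly identify its delicate point, namely assembling the component-wise constancy into a single certificate, which in the paper's version of this machinery is exactly the Chinese-remainder/primary-decomposition step.
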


\begin{remark}\rm
In the proof of theorem \ref{DNP1}, theorem \ref{Dang} and theorem \ref{DNP2}, we must work in a larger polynomial ring $\mathbb R[x,\lambda]$, i.e., we must add the Lagrange multipliers to our representations.
\end{remark}

\section{Sums of squares modulo critical ideals}

In this section, we present our main results. These are similar to theorem \ref{DNP1} and theorem \ref{Dang}, but without modulo $I_{\KKT}$. It is replaced by modulo another ideal - the critical ideal of $f$ on $K$. In its proof, we work particularly in the polynomial ring $\mathbb R[x]$.

Let us start with some notations. The ideal
$$I(f,K):=\langle g_Jh_{J^c}, \forall J\subseteq\{1,\ldots,s\}\rangle$$
generated by $g_Jh_{J^c}$ is called the {\it critical ideal of $f$ on $K$}. By Proposition \ref{var2}, we have
$$C(f,K)=V^{\mathbb R}(I(f,K)).$$

\begin{theorem}\label{main1}
Suppose that
\begin{enumerate}
\item[(i)] $f\ge 0$ on $K$,
\item[(ii)] $f$ satisfies the BHC at each zero of $f$ in $K$.
\end{enumerate}
Then $f\in P+I(f,K)$.
\end{theorem}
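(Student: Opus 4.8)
The plan is to reduce the statement to Dang's representation (Theorem~\ref{Dang}), whose hypotheses coincide exactly with ours, and then to eliminate the Lagrange multipliers so as to land back in the ring $\mathbb R[x]$. Applying Theorem~\ref{Dang} first yields, in the larger ring $\mathbb R[x,\lambda]$, an identity
$$f=\sigma+\sum_{i=1}^n p_iF_i+\sum_{j=1}^s q_j\lambda_jg_j,$$
where $\sigma$ lies in the preordering generated by $g_1,\ldots,g_s$ (its sums-of-squares coefficients possibly involving $\lambda$) and $p_i,q_j\in\mathbb R[x,\lambda]$. Everything then comes down to rewriting this right-hand side, a priori an element of $\mathbb R[x,\lambda]$, as a member of $P+I(f,K)$ inside $\mathbb R[x]$.

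As a warm-up I would record the easy containment $I(f,K)\subseteq I_{\KKT}$: substituting the stationarity relation $\nabla f\equiv\sum_j\lambda_j\nabla g_j$ modulo $\langle F_1,\ldots,F_n\rangle$ into the Gram determinants shows that each generator $g_Jh_{J^c}$ already belongs to $I_{\KKT}$, in agreement with $C(f,K)=\pi(V_{\KKT})$. The substantive direction is the descent. Here I would stratify according to the active set $A=J_x$ and invoke the regularity of $K$, which makes the active gradients $\{\nabla g_j:j\in A\}$ linearly independent; on the corresponding stratum the stationarity equations are therefore solvable for the multipliers by Cramer's rule, $\lambda_{(A)}=(B_AB_A^T)^{-1}B_A\nabla f$, with $B_A$ the matrix of active gradients, while complementarity forces $\lambda_j=0$ for $j\notin A$. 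The denominator $\det(B_AB_A^T)$ is the Gram determinant of the active gradients, hence strictly positive on $K$ by regularity, and the numerators are polynomials in $x$ alone.

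Substituting these rational multiplier values into the identity above and clearing the Gram-determinant denominators, I expect the two error sums to collapse onto generators of $I(f,K)$. For active set $A$ the residual $F_i$ becomes the component of $\nabla f$ orthogonal to $\mathrm{span}\{\nabla g_j:j\in A\}$, which by the very definition of $h_J$ vanishes precisely when $h_A=0$; after clearing denominators it contributes a multiple of $g_{A^c}h_A$, that is, exactly the generator $g_Jh_{J^c}$ with $J=A^c$ and $J^c=A$. The complementarity sum survives only through the active indices and is absorbed for the same reason, while $\sigma$, being a sum of squares times products of the $g_i$, stays in $P$ once even powers of the positive Gram determinants have been cleared. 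Assembling these stratawise identities over all subsets $J$ is exactly the combinatorial structure that the definition $I(f,K)=\langle g_Jh_{J^c}\rangle$ is designed to absorb, and produces the desired $f\in P+I(f,K)$.

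The main obstacle is the elimination step itself, and I see three delicate points. First, clearing the Gram-determinant denominators must not destroy the preordering structure of $\sigma$: one multiplies through by even powers of $\det(B_AB_A^T)$, but converting ``strictly positive on $K$'' into an honest certificate in $P$ needs justification. Second, one must verify that the residuals land in $I(f,K)$ and not merely in the possibly larger ideal $I_{\KKT}\cap\mathbb R[x]$, which is precisely where the equivalence $h_A=0\Leftrightarrow\nabla f\in\mathrm{span}\{\nabla g_j:j\in A\}$ is used essentially. Third, the stratawise pieces must be reconciled across the lower-dimensional locus where the active set jumps; controlling $f$ there via $f\ge 0$ on $K$ together with the local model supplied by the BHC in the completion $\mathbb R[[t_1,\ldots,t_n]]$—exactly as in the proofs of Theorem~\ref{Marshall} and Theorem~\ref{Dang}—is the technical heart of the argument.
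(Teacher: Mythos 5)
Your strategy---invoke Theorem~\ref{Dang} to get $f=\sigma+\sum_i p_iF_i+\sum_j q_j\lambda_jg_j$ in $\mathbb R[x,\lambda]$ and then eliminate the multipliers---is genuinely different from the paper's proof, but the elimination step, which you correctly identify as the heart of the matter, does not work as sketched; the three ``delicate points'' you list are not technicalities but fatal obstructions. First, the substitution $\lambda_{(A)}=(B_AB_A^T)^{-1}B_A\nabla f$ depends on the active set $A$, so it produces, for each $A$, an identity of rational functions meaningful only on the semi-algebraic stratum of $K$ where exactly the constraints in $A$ are active. These strata are lower-dimensional semi-algebraic sets, and a single polynomial identity cannot be recovered by ``assembling'' finitely many identities each valid only on such a stratum: there is no algebraic mechanism for this gluing, and the sentence claiming the definition of $I(f,K)$ ``is designed to absorb'' it is not an argument. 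Second, the claim that the cleared residual is a polynomial multiple of $g_{A^c}h_A$ confuses vanishing with ideal membership: the residual $\det(B_AB_A^T)\nabla f-B_A^T\mathrm{adj}(B_AB_A^T)B_A\nabla f$ vanishes on $\{h_A=0\}$ only at points where the active gradients are linearly independent, and regularity guarantees that only on $K$---not on $\mathbb R^n$, let alone on $\mathbb C^n$. So the Nullstellensatz gives no membership in $\langle h_A\rangle$ (at best a power of the residual, and only if the vanishing held on the full complex zero set, which it does not), and divisibility by $g_{A^c}h_A$ is simply unavailable. Third, clearing denominators multiplies the whole identity by $\det(B_AB_A^T)^{2k}$, so what you would certify is $\det(B_AB_A^T)^{2k}f\in P+I(f,K)$, not $f\in P+I(f,K)$; undoing this requires a certificate for the reciprocal of the Gram determinant on $K$, which is a Positivstellensatz problem of the same nature as the theorem being proved (and for fixed $A$ the polynomial $\det(B_AB_A^T)$ need not even be positive on all of $K$, since regularity only controls the gradients of the constraints active at each given point).

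For comparison, the paper's proof avoids multipliers entirely: it decomposes the complex variety $V(I(f,K))$ into irreducible components, separates the union $W_0$ of components disjoint from $K$ from the components meeting $\mathbb R^n$ (on which $f$ is constant by Lemma~\ref{lemma1}), groups the latter by the constant value $a_i$ of $f$, and passes to ideals $J_i$ from a primary decomposition. It then represents $f$ modulo each $J_i$ separately---via Lam's Positivstellensatz on $W_0$, a binomial-series square-root trick where $a_i>0$, and Marshall's Theorem~\ref{Marshall} ($f\in P+\langle f^2\rangle$, which is where the BHC enters) where $a_i=0$---and glues these congruences by the Chinese-remainder argument of Lemma~\ref{lemma5}. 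The localization-and-gluing difficulties your approach runs into are precisely what this component-by-component argument is engineered to circumvent.
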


To prove the theorem \ref{main1}, we need the following lemma.

\begin{lemma}\label{lemma1}
Let $W$ be an irreducible component of $V(I(f,K))$. If $W\cap\mathbb R^n\ne\emptyset$, then $f$ is constant on $W$.
\end{lemma}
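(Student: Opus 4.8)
The plan is to use the irreducibility of $W$ to collapse the many product-generators $g_Jh_{J^c}$ of $I(f,K)$ into a single gradient-dependence statement along $W$, and then to use the real point together with regularity of $K$ to turn that dependence into the assertion that $\nabla f$ lies in the span of the gradients of the constraints that vanish on $W$. Constancy of $f$ then follows because those constraints are themselves constant on $W$.

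First I would exploit that the coordinate ring $\mathbb{C}[x]/I(W)$ of the irreducible variety $W$ is a domain. Each generator $g_Jh_{J^c}$ of $I(f,K)$ vanishes on $W$, so for every $J\subseteq\{1,\ldots,s\}$ either $g_J\equiv 0$ on $W$ or $h_{J^c}\equiv 0$ on $W$. Setting $J_0:=\{\,j : g_j\equiv 0 \text{ on } W\,\}$ and taking $J=J_0^c$, the product $g_{J_0^c}=\prod_{j\notin J_0}g_j$ is a product of elements that are nonzero in the domain and hence is nonzero on $W$; therefore the alternative must hold, giving $h_{J_0}\equiv 0$ on $W$. By the observation recorded in the definition of $h_J$, this says that at every point of $W$ the vectors $\nabla f$ and $\{\nabla g_j\}_{j\in J_0}$ are linearly dependent.

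Next I would promote this dependence to genuine membership $\nabla f\in\mathrm{span}\{\nabla g_j : j\in J_0\}$ on a dense open subset of $W$. Pick a real point $x_0\in W\cap\mathbb{R}^n$; since $g_j\equiv 0$ on $W$ for $j\in J_0$, the constraints indexed by $J_0$ are active at $x_0$, and regularity of $K$ forces the gradients $\{\nabla g_j(x_0) : j\in J_0\}$ to be linearly independent. Linear independence is an open condition (non-vanishing of a maximal minor), so it persists on a Zariski-dense open subset $U\subseteq W$ because $W$ is irreducible. On $U$ the family $\{\nabla g_j\}_{j\in J_0}$ has full rank, so the dependence coming from $h_{J_0}\equiv 0$ can only be realised with a nonzero coefficient on $\nabla f$, i.e. $\nabla f$ lies in the span of the $\nabla g_j$, $j\in J_0$.

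Finally I would read off constancy. Since each $g_j$ ($j\in J_0$) is constant equal to $0$ on $W$, its differential annihilates every tangent vector to $W$ at a smooth point; as $\nabla f$ is a linear combination of these gradients on $U$, the differential of $f|_W$ also annihilates all tangent directions, so $f$ has vanishing derivative on the smooth locus of $W$. A holomorphic function with vanishing differential on the connected smooth locus of an irreducible variety is locally, hence globally, constant, and the real point $x_0$ pins together any complex-conjugate pieces so that the constant value agrees throughout; thus $f$ is constant on $W$. I expect the crux to be the middle step: the bare algebraic identity $h_{J_0}\equiv 0$ over $\mathbb{C}$ yields information about $\nabla f$ only when the active constraint gradients are themselves independent, so the argument must invoke the real point precisely to make the regularity hypothesis applicable and thereby exclude a spurious dependence among the $\nabla g_j$ alone.
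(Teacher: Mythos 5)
Your opening reduction is correct: since $I(W)$ is prime, for each $J$ either $g_J$ or $h_{J^c}$ vanishes identically on $W$, and with $J_0:=\{j: g_j|_W\equiv 0\}$ primality forces $h_{J_0}\equiv 0$ on $W$. But the step you yourself call the crux is false over $\mathbb C$. The paper's observation ``$h_J(x)=0$ iff the vectors $\nabla f,\nabla g_j\ (j\in J)$ are linearly dependent'' is a Gram-determinant statement valid only at \emph{real} points: by Cauchy--Binet, $h_J=\det(A_JA_J^T)$ is the sum of the squares of the maximal minors of $A_J$, and over $\mathbb C$ a sum of squares can vanish without each minor vanishing (the bilinear form $\sum_iu_iv_i$ has isotropic vectors). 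Concretely, take $n=2$, $s=0$, $f=x^2+y^2$. Then $I(f,K)=\langle h_\emptyset\rangle=\langle 4(x^2+y^2)\rangle$, and $W=\{(t,it): t\in\mathbb C\}$ is an irreducible component meeting $\mathbb R^2$ at the origin. Here your claim (which for $J_0=\emptyset$ reads ``$\nabla f\equiv 0$ on $W$'', i.e.\ $\nabla f$ lies in the span of the empty family) fails: $\nabla f(t,it)=2t(1,i)\neq 0$ for $t\neq 0$. The conclusion of the lemma is nevertheless true, since $f(t,it)=t^2-t^2\equiv 0$; but the mechanism is isotropy of $\nabla f$ along $W$, not any linear dependence, so the gap is not repairable within your scheme. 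This is precisely why the real-point hypothesis is delicate (for $f=x^2+y^2+z$ the component $\{x^2+y^2=-1/4\}$ of $V(\|\nabla f\|^2)$ carries non-constant $f$, but has no real points), and why the paper does not argue this way at all: its proof of the lemma is by reference to the proof of Lemma 3.6 in \cite{HP2010}, which must exploit the real point through a different route.

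A second, independent gap: you invoke regularity of $K$ at the point $x_0\in W\cap\mathbb R^n$. Regularity is assumed in the paper only at points of $K$, whereas the lemma's hypothesis is $W\cap\mathbb R^n\neq\emptyset$, not $W\cap K\neq\emptyset$; a real point of $W$ may violate some inequality $g_i\ge 0$, and at such a point nothing is assumed about the gradients of the constraints vanishing there. So even granting your dependence claim, the promotion to $\nabla f\in\mathrm{span}\{\nabla g_j: j\in J_0\}$ on a dense open subset of $W$ is unjustified. (Your last step is fine as far as it goes: if it were known that $\nabla f$ is a combination of the $\nabla g_j$ with $g_j|_W\equiv 0$, then $d(f|_W)=0$ on the smooth locus, which is connected for an irreducible variety---no real point is needed to ``pin together'' anything---and $f$ would indeed be constant.)
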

\begin{proof}
This follows from the proof of lemma 3.6 in \cite{HP2010}.
\end{proof}

\begin{proof}[Proof of theorem \ref{main1}]
We decompose $V(I(f,K))$ into its irreducible components and let $W_0$ be the union of all the components whose intersection with $K$ is empty. We note that this includes all components $W$ with $W\cap \mathbb R^n=\emptyset$. Thus, by lemma \ref{lemma1}, $f$ is constant on each of the remaining components. We group together all components for which $f$ takes the same value. Then we have pairwise-disjoint subsets $W_1,\ldots,W_r$ of $W$ such that for each $i$, $f$ takes a constant value $a_i$ on $W_i$, with the $a_i$ being distinct. Further, since each $W_i$ contains a real point and $f$ is non-negative on $C(f,K)\cap K$, the value of $f$ on each $W_i$ is real and non-negative. We assume $a_1>\cdots>a_r\ge 0$. We fix a primary decomposition of $I(f,K)$, for each $i\in \{0,1...,r\}$, let $J_i$ be the intersection of those primary components corresponding to the irreducible components occurring in $W_i$. Thus, $V(J_i) = W_i, \forall i=0,1,\ldots,r$.

Since $W_i\cap W_j=\emptyset$, we have $J_i+J_j=\mathbb R[x]$ by theorem \ref{weaknullstellensatz}. Therefore the Chinese remainder theorem (see, e.g., \cite{Eisenbud1995}) implies that there is an isomorphism
$$\varphi: \mathbb R[x]/I(f,K)\longrightarrow \mathbb R[x]/J_0\times\mathbb R[x]/J_1\times\cdots\times\mathbb R[x]/J_r.$$

\begin{lemma}\label{lemma2}
There is $q_0\in P$ such that $f\equiv q_0 \bmod J_0$.
\end{lemma}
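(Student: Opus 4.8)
The plan is to exploit the one property that singles out $J_0$: its real variety misses $K$. By construction $W_0\cap K=\emptyset$, and since $V(J_0)=W_0$ gives $V^{\mathbb R}(J_0)=W_0\cap\mathbb R^n$, the basic closed semi-algebraic set
$$\{x\in\mathbb R^n\mid g_1(x)\ge 0,\ldots,g_s(x)\ge 0\}\cap V^{\mathbb R}(J_0)$$
is empty. Note that $V^{\mathbb R}(J_0)$ itself may well contain real points (those lying outside $K$), so the real Nullstellensatz alone is not enough here; one genuinely needs the semi-algebraic (Positivstellensatz) input. This emptiness is what I would feed into the argument.

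First I would invoke the Positivstellensatz (see e.g. \cite{PD2004}) for the emptiness of the basic closed semi-algebraic set above, cut out additionally by the equations defining $J_0$. Since $\mathbb R[x]$ is Noetherian, $J_0$ is finitely generated, so the emptiness yields a certificate $-1\in P+J_0$; that is, there exist $\sigma\in P$ and $j\in J_0$ with $\sigma+j=-1$, equivalently $\sigma\equiv -1\bmod J_0$. This is the crucial step: it converts the geometric fact $W_0\cap K=\emptyset$ into the algebraic statement that $-1$ is represented by an element of the preordering modulo $J_0$. Next I would use the standard identity
$$f=\left(\frac{f+1}{2}\right)^2-\left(\frac{f-1}{2}\right)^2$$
and absorb the offending minus sign via $\sigma\equiv -1\bmod J_0$. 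Setting
$$q_0:=\left(\frac{f+1}{2}\right)^2+\sigma\left(\frac{f-1}{2}\right)^2,$$
one gets $q_0\equiv\left(\frac{f+1}{2}\right)^2-\left(\frac{f-1}{2}\right)^2=f\bmod J_0$. Because $P$ contains all squares and is closed under addition and under multiplication by squares, $q_0\in P$, which is exactly the assertion of the lemma.

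The main obstacle is the first step, namely setting up the Positivstellensatz so that the certificate lands precisely in $P+J_0$ — the preordering generated by the $g_i$ together with the ideal $J_0$ — rather than in some larger saturated object; this is where one must check that the preordering $T$ produced by the theorem coincides with $P$ as defined here and that finite generation of $J_0$ is used to write the equality constraints explicitly. This is the only place real-algebraic input enters; once $\sigma$ is in hand, the passage to $q_0$ is purely formal.
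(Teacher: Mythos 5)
Your proposal is correct and follows essentially the same route as the paper: the paper also converts $W_0\cap K=\emptyset$ into a certificate $-1\equiv u_0\bmod J_0$ with $u_0\in P$ (citing Theorem 8.6 of Lam \cite{Lam1984}, the same Positivstellensatz-type input you invoke via \cite{PD2004}), and then absorbs the sign by writing $f$ as a difference of sums of squares. The only difference is the particular identity used — the paper takes $f=\left(f+\frac{1}{2}\right)^2-\left(f^2+\frac{1}{4}\right)$ while you take $f=\left(\frac{f+1}{2}\right)^2-\left(\frac{f-1}{2}\right)^2$ — which is immaterial.
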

\begin{proof}
According to the argument presented above, $V(J_0)\cap K=\emptyset$, hence there exists $u_0\in P$ such that $-1\equiv u_0 \bmod J_0$. This result is a special case of theorem 8.6 in \cite{Lam1984}.

We write $f=f_1-f_2$ for SOS polynomials $f_1=(f+\frac{1}{2})^2$ and $f_2=(f^2+\frac{1}{4})$. Hence $f\equiv f_1+u_0f_2 \bmod J_0$. Let $q_0=f_1+u_0f_2\in P$. Then $f\equiv q_0\bmod J_0$.
\end{proof}

\begin{lemma}\label{lemma3}
$f$ is a sum of squares modulo $J_i$, for all $i=1,\ldots,r-1$.
\end{lemma}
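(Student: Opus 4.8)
The plan is to exploit the fact that for each $i\in\{1,\ldots,r-1\}$ the constant value $a_i$ is strictly positive, since $a_i>a_r\geq 0$. This positivity makes $f$ behave like an invertible element once reduced modulo $J_i$, and that is the whole point. The key structural observation is that $f-a_i$ vanishes on the entire complex variety $W_i=V(J_i)$: by Lemma \ref{lemma1}, $f$ is constant equal to $a_i$ on each irreducible component making up $W_i$. Hence $f-a_i\in I^{\mathbb R}(V(J_i))$, and Theorem \ref{strongnullstellensatz} gives $I^{\mathbb R}(V(J_i))=\sqrt{J_i}$. Therefore $f-a_i\in\sqrt{J_i}$, which means exactly that the image $\nu$ of $f-a_i$ in the quotient ring $A_i:=\mathbb R[x]/J_i$ is nilpotent, say $\nu^N=0$.

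First I would record that in $A_i$ we may write $\bar f=a_i+\nu=a_i\bigl(1+a_i^{-1}\nu\bigr)$, which is legitimate precisely because $a_i>0$ is a unit. Writing $\mu:=a_i^{-1}\nu$, again nilpotent with $\mu^N=0$, the element $1+\mu$ admits an explicit square root in $A_i$ given by the truncated binomial series $u:=\sum_{k=0}^{N-1}\binom{1/2}{k}\mu^k$. Squaring this finite sum and matching coefficients degree by degree against the formal power-series identity $\bigl((1+t)^{1/2}\bigr)^2=1+t$---all contributions of $\mu$-degree at least $N$ dropping out since $\mu^N=0$---one obtains $u^2=1+\mu$ in $A_i$. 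Because the coefficients $\binom{1/2}{k}$ are rational, $u$ is a genuine element of the $\mathbb R$-algebra $A_i$.

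Putting the pieces together, $\bar f=a_i\,u^2=\bigl(\sqrt{a_i}\,u\bigr)^2$ is a perfect square in $A_i$. Lifting $\sqrt{a_i}\,u$ to any representative $p\in\mathbb R[x]$ then yields $f\equiv p^2\bmod J_i$, so $f$ is a sum of squares modulo $J_i$, with a single summand, as claimed.

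The only genuinely delicate point is the verification that the truncated series $u$ really squares to $1+\mu$ exactly, rather than to a mere approximation; this is where the nilpotency $\mu^N=0$ is essential, since it simultaneously makes the series terminate and forces the unwanted higher-degree cross terms in $u^2$ to vanish. The hypothesis $a_i>0$ is equally indispensable: it is what licenses both the factorization $a_i(1+\mu)$ and the extraction of the real scalar $\sqrt{a_i}$. It is exactly this positivity that fails for the remaining component $W_r$, where $a_r$ may equal $0$, which is why that case is excluded from the present lemma and must be handled separately through the BHC.
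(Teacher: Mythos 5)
Your proof is correct and follows essentially the same route as the paper: both use the strong Nullstellensatz to show that $f/a_i-1$ is nilpotent modulo $J_i$, then apply the truncated binomial series for $(1+t)^{1/2}$ to extract an exact square root, with the positivity $a_i>0$ absorbing the constant into the square. The only cosmetic difference is that you carry out the computation in the quotient ring $\mathbb R[x]/J_i$, whereas the paper phrases the same identity as a polynomial equation with remainder term $qu^k\in J_i$.
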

\begin{proof}
According to the argument presented above, on each $W_i, 1\le i\le r-1, f=a_i>0$, and hence the polynomial $u=f/a_i-1$ vanishes on $W_i$. Then by theorem \ref{strongnullstellensatz} there exists some integer $k\ge 1$ such that $u^k\in J_i$. From the binomial identity, it follows that
$$1+u=\left(\sum_{j=0}^{k-1}\left(_{\ j}^{1/2}\right)u^j\right)^2+qu^k.$$
The reader can see clearly in lemma 7.24 in \cite{laurent2009}.\\
Thus $f=a_i(u+1)$ is a sum of squares modulo $J_i$.
\end{proof}

Now we continue the proof of theorem \ref{main1}.\\
If $a_r > 0$, then by the proof of lemma \ref{lemma3}, we imply that $f$ is a sum of squares modulo $J_r$.

\begin{lemma}\label{lemma4}
If $a_r=0$, then there is $q_r\in P$ such that $f\equiv q_r\bmod J_r$.
\end{lemma}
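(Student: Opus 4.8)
The plan is to feed the boundary Hessian hypothesis into the problem through Marshall's Theorem~\ref{Marshall} and then transport the resulting representation to the quotient $\mathbb R[x]/J_r$, using that the hypothesis $a_r=0$ forces $f$ to be nilpotent modulo $J_r$. Concretely, I would first apply Theorem~\ref{Marshall}: since $f$ satisfies the BHC at each zero of $f$ in $K$, there are $\sigma\in P$ and $h\in\mathbb R[x]$ with $f=\sigma+hf^2$, which I rewrite as the single relation $f(1-hf)=\sigma$.

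Next I would pass to $R:=\mathbb R[x]/J_r$ and write $\bar{\,\cdot\,}$ for the reduction map. Because $a_r=0$, the polynomial $f$ vanishes on $W_r=V(J_r)$, so the strong Nullstellensatz (Theorem~\ref{strongnullstellensatz}) gives $f\in I^{\mathbb R}(V(J_r))=\sqrt{J_r}$; hence $\bar f$ is nilpotent, say $\bar f^{\,N}=0$. Consequently $t:=\bar h\bar f$ is nilpotent, $1-t$ is a unit of $R$, and---exactly as in the binomial identity used in Lemma~\ref{lemma3}---the truncated series $\bar v:=\sum_{j=0}^{N-1}\binom{1/2}{j}(-t)^{j}$ satisfies $\bar v^{2}=1-t$ in $R$. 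Thus $1-\bar h\bar f$ is a perfect square in $R$; I would fix a polynomial $w$ lifting its inverse square root $\bar v^{-1}$, so that $\bar w^{2}=(1-\bar h\bar f)^{-1}$.

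Reducing $f(1-hf)=\sigma$ modulo $J_r$ then gives $\bar f\,\bar v^{2}=\bar\sigma$, whence $\bar f=\bar\sigma\,\bar w^{2}$, i.e.\ $f\equiv\sigma w^{2}\bmod J_r$. Since $w^{2}$ is a square and $P$ is a preordering, closed under multiplication, the element $q_r:=\sigma w^{2}$ lies in $P$ and satisfies $f\equiv q_r\bmod J_r$, as required.

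The main obstacle is conceptual rather than computational: unlike Lemma~\ref{lemma3}, where a strictly positive value $a_i$ lets one write $f/a_i=1+(\text{nilpotent})$ as a square directly, here $f$ itself is the nilpotent element and is \emph{not} a square in $R$ in general. The BHC is precisely what repairs this. The delicate point is to recognise that Marshall's $\langle f^2\rangle$-representation combines with the nilpotency of $\bar f$ to make the cofactor $1-hf$ an invertible \emph{square} modulo $J_r$; once that is in hand, remaining inside the preordering $P$ is automatic, since $P$ absorbs multiplication by squares.
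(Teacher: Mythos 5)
Your proof is correct and follows essentially the same route as the paper: both invoke Marshall's Theorem~\ref{Marshall} to get $f(1-hf)=\sigma\in P$, use the vanishing of $f$ on $W_r$ (via the Nullstellensatz) to make $\bar f$ nilpotent modulo $J_r$, and then use a truncated binomial series to exhibit the cofactor as a square times a unit. The only cosmetic difference is that you take the square root of $1-hf$ and invert it, whereas the paper takes the inverse $v=\sum_{i=0}^{m-1}(hf)^i$ of $1-hf$ and shows $v$ itself is a square modulo $J_r$ --- a trivial rearrangement of the same idea.
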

\begin{proof}
By the assumption that $f$ satisfies the BHC at each zero of $f$ on $K$ and by theorem \ref{Marshall}, there exist $g\in P$ and $h\in\mathbb R[x]$ such that $f=g+hf^2$, i.e., $f(1-hf)=g$. Since $f$ vanishes on $W_r$, $f^m\in J_r$ for some positive integer $m$. Let $t=hf,v=\displaystyle\sum_{i=0}^{m-1}t^i$. Then $t,v\in\mathbb R[x],t^m\in J_r$, and $(1-t)v\equiv 1 \bmod J_r$. By the binomial theorem, there exist $c_i\in\mathbb Q, i=0,1,\ldots,m-1$, such that
$$v\equiv \biggl(\sum_{i=0}^{m-1}c_it^i\biggr)^2\bmod J_r.$$
This yields $q_r\in P$ satisfying
$$f\equiv f(1-hf)v = gv\equiv q_r\bmod J_r.$$
\end{proof}

To finish the proof of theorem \ref{main1}, we claim the following lemma.

\begin{lemma}\label{lemma5}
Given $q_0,q_1,\ldots,q_r\in\mathbb R[x]$, there exists $q\in\mathbb R[x]$ such that $q-q_i\in J_i, \forall i=0,1,\ldots,r$. Moreover, if each $q_i\in P$, then $q\in P$.
\end{lemma}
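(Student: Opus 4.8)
The plan is to split the statement into two parts. The existence of some $q\in\mathbb R[x]$ with $q-q_i\in J_i$ for every $i$ is precisely the surjectivity of the Chinese remainder isomorphism $\varphi$ already recorded above, so the only genuine content is the refinement that $q$ may be taken inside the preordering $P$ whenever every $q_i\in P$. The usual CRT witness is a combination $\sum_i p_iq_i$ built from an idempotent ``partition of unity'' $p_0,\ldots,p_r$, but such $p_i$ are in general not sums of squares, so this alone does not keep us inside $P$. My fix is to use the \emph{squares} $p_i^2$ as coefficients instead.

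First I would manufacture the partition of unity. Since the $J_j$ are pairwise comaximal, for each fixed $i$ the ideal $J_i$ is comaximal with the intersection $\bigcap_{j\ne i}J_j$; that is, $J_i+\bigcap_{j\ne i}J_j=\mathbb R[x]$. Writing $1=a_i+p_i$ with $a_i\in J_i$ and $p_i\in\bigcap_{j\ne i}J_j$, I obtain polynomials $p_0,\ldots,p_r$ satisfying
$$p_i\equiv 1\bmod J_i,\qquad p_i\equiv 0\bmod J_j\ (j\ne i).$$

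Next I would set $q=\sum_{i=0}^r p_i^2\,q_i$ and verify the congruences. Modulo $J_j$ every term with $i\ne j$ vanishes because $p_i\in J_j$, while $p_j\equiv 1\bmod J_j$ forces $p_j^2\equiv 1\bmod J_j$; hence $q\equiv q_j\bmod J_j$ for each $j$, which gives the required $q$. The purpose of squaring is now transparent: if each $q_i\in P$, then each $p_i^2q_i$ is a square times an element of $P$ and so lies in $P$, and $P$ is closed under addition, whence $q\in P$.

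The only delicate step is the passage from pairwise comaximality of the $J_j$ to comaximality of $J_i$ with $\bigcap_{j\ne i}J_j$, and I expect this to be the main (though still routine) obstacle. It follows either by multiplying together the relations $1\in J_i+J_j$ over all $j\ne i$, or equally by invoking the weak Nullstellensatz (Theorem \ref{weaknullstellensatz}) together with the fact that $V\bigl(J_i\bigr)\cap V\bigl(\bigcap_{j\ne i}J_j\bigr)=W_i\cap\bigcup_{j\ne i}W_j=\emptyset$ by the pairwise disjointness of the $W_j$. Everything else is a direct verification.
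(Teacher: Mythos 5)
Your proof is correct, and it takes a recognizably different route from the paper's, though the two share the same essential trick. The paper proves the lemma by induction on $r$: its base case $r=1$ is exactly your construction for two ideals (write $1=u_0+u_1$ with $u_i\in J_i$ and set $q=u_0^2q_1+u_1^2q_0$, so the gluing coefficients are squares and $P$-membership is preserved), and the induction step then merges $J_0\cap J_1$ with the remaining ideals, asserting $(J_0\cap J_1)+J_i=\mathbb R[x]$ for $i\ge 2$ without proof. You instead do the whole gluing in one shot: you build a full system $p_0,\ldots,p_r$ with $p_i\equiv 1\bmod J_i$ and $p_i\equiv 0\bmod J_j$ for $j\ne i$, and take $q=\sum_i p_i^2q_i$. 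The cost is that you need the generalized comaximality $J_i+\bigcap_{j\ne i}J_j=\mathbb R[x]$, which you correctly identify as the only delicate point and prove in two valid ways (the product of the pairwise relations in fact gives the stronger statement $1\in J_i+\prod_{j\ne i}J_j$; the Nullstellensatz route via $V\bigl(\bigcap_{j\ne i}J_j\bigr)=\bigcup_{j\ne i}W_j$ and pairwise disjointness of the $W_j$ is equally fine in this paper's context, where pairwise comaximality of the $J_i$ has already been established). What your version buys is an explicit closed formula for $q$ and no induction; what the paper's version buys is that at each step only a two-ideal CRT is invoked, though it quietly relies on the same kind of intersection-comaximality fact that you actually prove. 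Your observation that $p^2\sigma\in P$ for $\sigma\in P$ (since multiplying each SOS coefficient $\sigma_e$ by $p^2$ keeps it SOS) is exactly the closure property needed, so the $P$-membership claim is sound.
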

\begin{proof}
The proof is by induction on $r \ge 1$. Assume $r = 1$. As $J_0+J_1 = \mathbb R[x]$, $1 = u_0+u_1$ for some $u_0\in J_0, u_1 \in J_1$. Set $q := u^2_0q_1+u^2_1q_0$; thus $q\in P$. Moreover, $q-q_0 = u^2_0q_1 +q_0(u^2_1 -1) = u^2_0q_1 - u_0(u_1 + 1)q_0 \in J_0$. Analogously, $q-q_1 \in J_1$. Let $t$ be the constructed polynomial, satisfying $t -q_0 \in J_0$ and $t-q_1 \in J_1$. Consider now the ideals $J_0\cap J_1, J_2,\ldots,J_r$. As $(J_0\cap J_1)+ J_i = \mathbb R[x] (i \ge 2)$, we can apply the induction assumption and deduce the existence of $q \in \mathbb R[x]$ for which $q-t \in J_0\cap J_1, q-q_i \in J_i (i \ge 2)$. Moreover, $q\in P$ if $t,q_2,...,q_r\in P$, which concludes the proof.
\end{proof}
Using lemma \ref{lemma2}, lemma \ref{lemma3}, lemma \ref{lemma4} and lemma \ref{lemma5}, we imply that there is $q\in P$ such that $f\equiv q \bmod I(f,K)$, i.e., $f\in P+I(f,K)$.
\end{proof}
\begin{remark}\rm
If we replace the BHC condition by the radical condition of $I(f,K)$, then we will have the following result.
\end{remark}
\begin{theorem}
Suppose that
\begin{enumerate}
\item[(i)] $f\ge 0$ on $K$,
\item[(ii)] $I(f,K)$ is radical.
\end{enumerate}
Then $f\in P+I(f,K)$.
\end{theorem}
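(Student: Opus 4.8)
The plan is to follow verbatim the architecture of the proof of Theorem \ref{main1}, changing only the single step where the BHC was invoked. First I would decompose $V(I(f,K))$ into irreducible components, collect into $W_0$ those whose intersection with $K$ is empty (this includes the components with no real point), and apply Lemma \ref{lemma1} to conclude that $f$ is constant on each surviving component. Grouping the survivors by the common value of $f$ yields pairwise-disjoint sets $W_1,\dots,W_r$ on which $f$ takes distinct values $a_1>\cdots>a_r\ge 0$, all real and non-negative because each $W_i$ carries a real point of $C(f,K)\cap K$. Fixing a primary decomposition of $I(f,K)$ and letting $J_i$ be the intersection of the primary components belonging to $W_i$, the weak Nullstellensatz (Theorem \ref{weaknullstellensatz}) gives $J_i+J_j=\mathbb R[x]$ for $i\ne j$, so the Chinese remainder theorem furnishes the same isomorphism $\varphi$ as before.

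The point is that Lemmas \ref{lemma2}, \ref{lemma3} and \ref{lemma5} never used the BHC. Lemma \ref{lemma2} handles $J_0$ purely from $V(J_0)\cap K=\emptyset$; Lemma \ref{lemma3} handles each $J_i$ with $a_i>0$ (and, through its proof, also $J_r$ when $a_r>0$) using only the Nullstellensatz and the binomial identity; and Lemma \ref{lemma5} is pure Chinese-remainder gluing that preserves membership in $P$. Hence all of these carry over unchanged, and the only case requiring a new argument is $a_r=0$, where Theorem \ref{main1} had appealed to Marshall's theorem through Lemma \ref{lemma4}.

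Here I would exploit radicality directly, in place of Lemma \ref{lemma4}. Since $I(f,K)$ is radical, I take its irredundant primary decomposition into minimal primes; each $J_i$ is then an intersection of such primes and is therefore itself radical. In particular $J_r$ is radical, so by the strong Nullstellensatz (Theorem \ref{strongnullstellensatz}) we have $J_r=\sqrt{J_r}=I^{\mathbb R}(V(J_r))=I^{\mathbb R}(W_r)$. As $a_r=0$, the polynomial $f$ vanishes on $W_r$, whence $f\in I^{\mathbb R}(W_r)=J_r$; that is, $f\equiv 0\bmod J_r$, and I may take $q_r=0\in P$, dispensing with the $f\in P+\langle f^2\rangle$ machinery entirely.

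Finally I would assemble the pieces exactly as in Theorem \ref{main1}: applying Lemma \ref{lemma5} to $q_0\in P$ (from Lemma \ref{lemma2}), the representatives $q_i\in P$ with $f\equiv q_i\bmod J_i$ for $1\le i\le r-1$ (from Lemma \ref{lemma3}), and $q_r\in P$ (the $a_r>0$ case of Lemma \ref{lemma3}, or $q_r=0$ when $a_r=0$), I obtain $q\in P$ with $f\equiv q\bmod I(f,K)$, i.e.\ $f\in P+I(f,K)$. I expect the only genuinely substantive point to be the justification that $J_r$ is radical, namely that radicality of $I(f,K)$ is inherited by the intersection of the primary components corresponding to $W_r$; once that is granted, the vanishing-implies-membership step is immediate and everything else is bookkeeping already carried out in the proof of Theorem \ref{main1}.
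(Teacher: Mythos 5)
Your proof is correct, but it takes a genuinely different route from the paper's. The paper does not recycle the Chinese-remainder scaffolding of Theorem \ref{main1} at all: invoking lemma 1 of \cite{NDS2006} (applicable because, under the paper's notion of irreducibility, each $W_i$ is conjugate-symmetric), it produces real polynomials $p_0,p_1,\ldots,p_r$ with $p_i(W_j)=\delta_{ij}$, forms the single element $q:=q_0p_0^2+\sum_{i=1}^r a_ip_i^2\in P$ (with $q_0$ from Lemma \ref{lemma2} and $a_i\ge 0$), observes that $f-q$ vanishes on every component of $V(I(f,K))$, and then applies the Nullstellensatz exactly once, using radicality of the \emph{whole} ideal $I(f,K)$ to conclude $f-q\in I(f,K)$. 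This bypasses Lemmas \ref{lemma3}, \ref{lemma4} and \ref{lemma5} entirely --- no binomial-series trick, no gluing. Your argument instead keeps the full CRT apparatus and localizes the use of radicality: since a radical ideal in $\mathbb R[x]$ is the intersection of its minimal primes, each $J_i$ --- in particular $J_r$ --- is an intersection of primes and hence radical, so when $a_r=0$ the vanishing of $f$ on $W_r=V(J_r)$ gives $f\in I^{\mathbb R}(V(J_r))=\sqrt{J_r}=J_r$ by Theorem \ref{strongnullstellensatz}, and $q_r=0\in P$ works. That is the one substantive new step, and it is sound; it replaces Lemma \ref{lemma4} without any appeal to Marshall's theorem. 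A side benefit of your route is that it isolates exactly where radicality matters: you only need the primary components lying over the zero set of $f$ to be radical, a formally weaker hypothesis than radicality of all of $I(f,K)$. What the paper's route buys in exchange is brevity --- a direct interpolation plus a one-shot Nullstellensatz --- at the price of importing the conjugate-symmetry discussion and lemma 1 of \cite{NDS2006}.
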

\begin{proof}
From the proof of theorem \ref{main1}, by our definition of irreducibility, each $W_i$ is conjugate symmetric (i.e., a point $X\in\mathbb C^n$ belong to $W_i$ if and only if its complex conjugate $\bar X\in W_i$). By lemma 1 in \cite{NDS2006}, there exist polynomials $p_0,p_1,\ldots,p_r\in \mathbb R[x]$ such that $p_i(W_j)=\delta_{ij}$, where $\delta_{ij}$ is the Kronecker delta function.

We consider the polynomial $$q:=q_0p^2_0+\sum_{i=1}^ra_ip^2_i,$$
where $q_0$ is as in lemma \ref{lemma2}. By construction, $q\in P$.

Moreover, $f-q$ vanishes on $C(f,K)$, since $f(x)=q_0(x)=q(x)$ for $X\in W_0$ (by lemma \ref{lemma2}) and $f(x)=a_i=q(x)$ for $X\in W_i, \forall i=1,\ldots,r$.

By the assumption that $I(f,K)$ is radical and using Hilbert's Nullstellens\"{a}tz (see in \cite{Cox1997}), we deduce that $f-q\in I(f,K)$. This implies that $f\in P+I(f,K)$.
\end{proof}

\begin{remark}\rm
If we leave both the radical condition of $I(f,K)$ and the BHC condition, then we will have the corresponding representations of strictly positive polynomials.
\end{remark}
\begin{theorem}\label{main2}
If $f>0$ on $K$, then $f\in P+I(f,K)$.
\end{theorem}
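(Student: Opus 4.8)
The plan is to re-run the decomposition argument from the proof of Theorem \ref{main1}, observing that the strict positivity hypothesis eliminates the single place where the boundary Hessian conditions entered. Concretely, I would decompose $V(I(f,K))$ into its irreducible components, let $W_0$ be the union of all components whose intersection with $K$ is empty (which, since $K\subseteq\mathbb R^n$, includes every component meeting $\mathbb R^n$ only outside $K$ as well as every component with $W\cap\mathbb R^n=\emptyset$), and invoke Lemma \ref{lemma1} to conclude that $f$ is constant on each of the remaining components. Grouping those remaining components according to the common value of $f$ yields pairwise-disjoint subsets $W_1,\ldots,W_r$ on which $f$ takes distinct constant values $a_1>\cdots>a_r$, and I fix a primary decomposition of $I(f,K)$ with associated ideals $J_0,J_1,\ldots,J_r$ satisfying $V(J_i)=W_i$, exactly as before.

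The key new observation is that every $a_i$ with $i\ge 1$ is strictly positive. Each such $W_i$ meets $K$ by construction, so it contains a real point $x\in K$; there $f(x)=a_i$ by Lemma \ref{lemma1}, and $f>0$ on $K$ forces $a_i>0$. In particular $a_r>0$, so the case $a_r=0$ of Lemma \ref{lemma4}---the only step in the proof of Theorem \ref{main1} that appealed to the BHC through Theorem \ref{Marshall}---simply never arises.

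With this in hand I would assemble the representation componentwise. Lemma \ref{lemma2} furnishes $q_0\in P$ with $f\equiv q_0\bmod J_0$, and since its proof uses only $V(J_0)\cap K=\emptyset$ together with the Positivstellensatz, it applies verbatim. Because every $a_i>0$, the binomial-series computation in the proof of Lemma \ref{lemma3}---valid whenever $f$ equals a positive constant on $W_i$---now applies to all of $i=1,\ldots,r$ (the paper already records this for $i=r$ in the line following Lemma \ref{lemma3}), producing $q_i\in P$ with $f\equiv q_i\bmod J_i$. Finally, the Chinese remainder isomorphism $\varphi$ and Lemma \ref{lemma5} glue $q_0,q_1,\ldots,q_r$ into a single $q\in P$ with $f\equiv q\bmod I(f,K)$, which is precisely the assertion $f\in P+I(f,K)$.

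The step I expect to require the most care is the bookkeeping behind $a_r>0$: one must be certain that each component collected into $W_1,\ldots,W_r$ contains a genuine real point lying in $K$, not merely in $\mathbb R^n$, since only then does the hypothesis $f>0$ on $K$ deliver positivity of the constant value. Components that meet $\mathbb R^n$ but avoid $K$ must be absorbed into $W_0$ and disposed of by Lemma \ref{lemma2}. Once this accounting is set up correctly, the remainder is a direct specialization of the proof of Theorem \ref{main1}, with the BHC hypothesis quietly discarded.
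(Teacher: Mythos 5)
Your proposal is correct and follows essentially the same route as the paper: the paper's proof likewise reduces to the argument for Theorem \ref{main1}, notes that strict positivity on $K$ forces $a_1>\cdots>a_r>0$ so that Lemma \ref{lemma4} (the only use of the BHC) is never needed, and then glues via Lemma \ref{lemma2}, Lemma \ref{lemma3}, and Lemma \ref{lemma5}. Your extra care about components meeting $K$ rather than merely $\mathbb{R}^n$ is exactly the bookkeeping implicit in the paper's definition of $W_0$.
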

\begin{proof}
 This follows similar argument in the proof of theorem \ref{main1}. However, we can assume $a_1>\cdots>a_r>0$. Thus, by lemma \ref{lemma3}, $f$ is a sum of squares modulo $J_i$, for all $i=1,\ldots,r$. Also by lemma \ref{lemma2} and lemma \ref{lemma5}, we imply that there is $q\in P$ such that $f\equiv q \bmod I(f,K)$, i.e., $f\in P+I(f,K)$.
\end{proof}

\section{Applications in optimization}
In this section, we present a result that is similar to theorem $4.1$ in \cite{NDP2007} and theorem $6.1$ in \cite{Dang2009}.

We consider the following optimization problem: Find
\begin{equation}\label{3}
f^*:=\inf_{x\in K}f(x).
\end{equation}
In the case where $K$ is compact, the SOS methods are based on representations of positive polynomials on compact semi-algebraic sets, which were presented in the theorems of Schm\"{u}dgen \cite{Schmudgen1991} and Putinar \cite{Putinar1993}. However, these theorems do not hold in the case where $K$ is not compact. A more traditional approach in numerical optimization methods uses the first order optimality conditions. Using theorem \ref{main1} and theorem \ref{main2}, we combine these two methods to give a procedure for approximating $f^*$ in the case where the semi-algebraic set is not necessarily compact.

In order to implement membership in $P+I(f,K)$ as a SDP, we need a bound on the degrees of the sums of squares involved. Thus, for $d\in\mathbb N$, we define the truncated preordering as follows:
$$P_d = \biggl\{\sum_{e\in\{0,1\}^s}\sigma_eg_1^{e_1}\ldots g_s^{e_s}\mid \deg(\sigma_eg_1^{e_1}\ldots g_s^{e_s})\le 2d\biggr\},$$
and the truncated critical ideal as follows:
$$I_d(f,K)=\biggl\{\sum_{J\subseteq\{1,\ldots,s\}}\phi_Jg_Jh_{J^c}\mid \deg(\phi_Jg_Jh_{J^c})\le 2d\biggr\}.$$
Then we define a sequence $\{f^*_d\}$ of SOS relaxations of the optimization problem (\ref{3}) as follows:
\begin{equation}\label{4}
f^*_d=\max_{\Gamma\in\mathbb R}\Gamma,
\end{equation}
\begin{equation}\label{5}
s.t. f(x)-\Gamma\in P_d+I_d(f,K).
\end{equation}
Obviously each $\Gamma$ feasible in (\ref{5}) is a lower bound of $f^*$. So $f^*_d\le f^*$. When we increase $d$, the feasible region defined by (\ref{5}) is increasing, and hence the sequence of lower bounds $\{f^*_d\}$ is also monotonically increasing. Thus we have
$$f^*_1\le f^*_2\le f^*_3\le\cdots\le f^*.$$
It can be shown that the sequence of lower bounds $\{f^*_d\}$ obtained from (\ref{4}) and (\ref{5}) converges to $f^*$ in (\ref{3}), provided that $f^*$ is attained at one point $x^*\in K$. We summarize in the following theorem:
\begin{theorem}\label{apply}
Assume $f$ has a minimum $f^* := f(x^*)$ at one point $x^*\in K$. Then $\lim_{d\to\infty} f^*_d = f^*$. Furthermore, if $f$ satisfies the BHC at each zero of $f-f^*$ in $K$, then there exists some $d\in\mathbb N$ such that $f^*_d = f^*$, i.e., the SOS relaxations (\ref{4}) and (\ref{5}) converge in a finite number of steps.
\end{theorem}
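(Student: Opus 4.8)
The plan is to reduce both assertions to the representation theorems already proved — Theorem \ref{main2} for the asymptotic statement and Theorem \ref{main1} for the finite-convergence statement — applied not to $f$ itself but to the shifted polynomials $f-f^*+\epsilon$ and $f-f^*$. The first thing I would record is that shifting $f$ by a constant leaves the critical ideal unchanged: since $\nabla(f-c)=\nabla f$ for every $c\in\mathbb R$, each matrix $A_J$, and hence each generator $g_Jh_{J^c}$, is unaltered, so $I(f-c,K)=I(f,K)$ (this sharpens Proposition \ref{var1}(i) to the level of ideals). I would likewise note that the BHC is insensitive to constant shifts, since only the constant term $f_0$ of the local expansion $f=f_0+f_1+f_2+\cdots$ changes; thus ``$f$ satisfies the BHC at each zero of $f-f^*$ in $K$'' is the same as ``$f-f^*$ satisfies the BHC at each zero of $f-f^*$ in $K$,'' which is exactly the hypothesis required to invoke Theorem \ref{main1} for $f-f^*$. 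Throughout I would use the two elementary facts already established in the text: every $\Gamma$ feasible in (\ref{5}) is a lower bound for $f^*$ (so $f^*_d\le f^*$), and the sequence $\{f^*_d\}$ is nondecreasing.

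For the asymptotic statement, fix $\epsilon>0$. Since $f^*=\inf_K f$, we have $f-f^*+\epsilon\ge\epsilon>0$ on $K$, so Theorem \ref{main2} yields $f-f^*+\epsilon\in P+I(f-f^*+\epsilon,K)=P+I(f,K)$. Such a membership is witnessed by a single representation $f-f^*+\epsilon=p+\iota$ with $p\in P$ and $\iota\in I(f,K)$, each a finite sum of terms of bounded degree; choosing $d$ so large that all these terms have degree at most $2d$ places $p\in P_d$ and $\iota\in I_d(f,K)$. Hence $\Gamma=f^*-\epsilon$ is feasible in (\ref{5}) at level $d$, giving $f^*_d\ge f^*-\epsilon$. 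By monotonicity this persists for all larger $d$, so $\liminf_{d\to\infty}f^*_d\ge f^*-\epsilon$; letting $\epsilon\to 0$ and combining with $f^*_d\le f^*$ gives $\lim_{d\to\infty}f^*_d=f^*$.

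For the finite-convergence statement, I would run the identical truncation argument with $\epsilon=0$, but now using Theorem \ref{main1} in place of Theorem \ref{main2}. The minimum being attained gives $f-f^*\ge 0$ on $K$, and the zeros of $f-f^*$ in $K$ are precisely the minimizers, at which the BHC is assumed; hence Theorem \ref{main1} applies to $f-f^*$ and yields $f-f^*\in P+I(f-f^*,K)=P+I(f,K)$. Truncating the resulting representation as above produces a finite $d$ with $f-f^*\in P_d+I_d(f,K)$, so $\Gamma=f^*$ is feasible at level $d$ and $f^*_d\ge f^*$. With $f^*_d\le f^*$ this forces $f^*_d=f^*$, and monotonicity gives equality for all larger indices.

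The routine parts — the lower-bound property, the monotonicity, and the degree-truncation passage from $P+I(f,K)$ to $P_d+I_d(f,K)$ — present no real difficulty. The points that genuinely require care, and which I would verify explicitly, are the two invariance facts under the constant shift: that the critical ideal is literally unchanged (so the ideal modulo which we represent is exactly the $I(f,K)$ appearing in (\ref{5})), and that the assumed BHC at the zeros of $f-f^*$ is precisely what Theorem \ref{main1} demands of $f-f^*$. Once these are in hand, the theorem follows as an immediate corollary of Theorems \ref{main1} and \ref{main2}.
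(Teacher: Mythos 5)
Your proposal is correct and takes essentially the same route as the paper: the paper's proof of Theorem \ref{apply} simply defers to the proofs of Theorem 6.1 in \cite{Dang2009} and Theorem 4.1 in \cite{NDP2007}, which are exactly this shift-and-truncate reduction --- apply Theorem \ref{main2} to $f-f^*+\epsilon$ for the limit and Theorem \ref{main1} to $f-f^*$ for finite convergence, then bound degrees. Your explicit verification of the two shift-invariance facts (that $I(f-c,K)=I(f,K)$ because the generators $g_Jh_{J^c}$ are literally unchanged, and that the BHC hypothesis transfers to $f-f^*$) supplies precisely the details the paper leaves to the cited references.
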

\begin{proof}
The proof is similar to that of theorem $6.1$ in \cite{Dang2009} (see also theorem $4.1$ in \cite{NDP2007}). However, we only consider the polynomial ring $\mathbb R[x]$.
\end{proof}

\subsection*{Acknowledgment}
The author would like to thank Prof. Murray Marshall, Prof. Ha Huy Vui and Assoc. Prof. Pham Tien Son for many interesting and helpful discussions on the topic of this work.

\end{document}